\begin{document}

\bibliographystyle{plainnat}
\newcommand{\PS}{\mathcal{P}}
\newcommand{\id}{\mbox{Id}}
\newcommand{\supp}{\mbox{supp}}
\newcommand{\R}{\mathbf{R}}
\newcommand{\E}{\mathbb{E}}
\renewcommand{\P}{\mathbb{P}}
\newcommand{\bR}{\partial\mathbf{R}}
\newcommand{\Rk}{\R^k}  % Rk
\newcommand{\Rkp}{\Rk_+}
\renewcommand{\S}{\mathbf{S}}  % non-negative cone of vector space
\newcommand{\iS}{\S\setminus \S_0}  % positive cone of vector space
\newcommand{\Cp}{\I\Sn }  % positive cone of vector space
\newcommand{\iDelta}{\Delta\setminus\Delta_0}
\def\cL{\mathcal{L}}
\def\cA{\mathcal{A}}
\def \e{\omega}

\newtheorem{theorem}{Theorem}[section]
\newtheorem{corollary}[theorem]{Corollary}
\newtheorem{question}{Question}
\newtheorem{conjecture}{Conjecture}
\newtheorem{lemma}[theorem]{Lemma}
\newtheorem{proposition}[theorem]{Proposition}
\newtheorem{definition}{Definition}
\newtheorem{remark}{Remark}
\newtheorem{hypothesis}[theorem]{Hypothesis}
\newtheorem{example}[theorem]{Example}
\newcommand{\nz}{\hfill\break}
\newcommand{\lin}{\mbox{lin }}

\title{Persistence for stochastic difference equations:\\ A mini-review}

\author{Sebastian J. Schreiber}
\address{Department of Evolution and Ecology and the Center for Population Biology\\
University of California, Davis, California 95616}
\email{sschreiber@ucdavis.edu}

\maketitle
\begin{abstract}
Understanding under what conditions populations, whether they be plants, animals, or viral particles, persist is an issue of theoretical and practical importance in population biology. Both biotic interactions and environmental fluctuations are key factors that can facilitate or disrupt persistence. One approach to examining the interplay between these deterministic and stochastic forces is the construction and analysis of stochastic difference equations $X_{t+1}=F(X_t,\xi_{t+1})$ where $X_t \in \R^k$ represents the state of the populations and $\xi_1,\xi_2,\dots$ is a sequence of random variables representing environmental stochasticity. In the analysis of these stochastic models, many theoretical population biologists are interested in whether the models are bounded and persistent. Here, boundedness asserts that asymptotically $X_t$ tends to remain in compact sets. In contrast, persistence requires that $X_t$ tends to be ``repelled'' by some ''extinction set'' $S_0\subset \R^k$. Here, results on both of these proprieties are reviewed for single species, multiple species, and structured population models. The results are illustrated with applications to stochastic versions of the Hassell and Ricker single species models, Ricker, Beverton-Holt,  lottery models of competition, and lottery models of rock-paper-scissor games. A variety of conjectures and suggestions for future research are presented. 
\end{abstract}

\linenumbers
\vskip 0.1in
\centerline{\large To appear in the \emph{Journal of Difference Equations and Applications}}

\section{Introduction}

One of the most fundamental equations in population biology is ``what are the necessary conditions to ensure the long-term persistence of a population or a collection of interacting populations?'' A fruitful approach to addressing this question has been the development and analysis of mathematical models. For deterministic models, such as difference or differential equations, any reasonable definition of persistence requires the existence of an attractor bounded away from extinction of one or more of the populations~\citep{jtb-06}. When this attractor is a global attractor (i.e. its basin includes all non-extinction states), the system is uniformly persistent or permanent~\citep{schuster-etal-79,hofbauer-81,butler-freedman-waltman-86}. Permanence ensures populations recover from infrequent large perturbations often experienced by biological systems~\citep{jansen-sigmund-98,jtb-06}. Since its introduction, methods for verifying permanence have been developed extensively for deterministic models accounting for nonlinear species interactions, stage-structure within populations, and spatial heterogeneity (see, e.g, the books of \citet{hofbauer-sigmund-98,cantrell-cosner-03,zhao-03,smith-thieme-11}).  

Temporal fluctuations in environmental conditions also play a crucial role in determining persistence. One approach to understanding the influence of these temporal fluctuations is the study of uniform persistence for non-autonomous difference or differential equations~\citep{thieme-00,mierczyski-etal-04,smith-thieme-11}. A strength of this approach is that it allows for relatively arbitrary forms of temporal fluctuations including periodic, quasi-periodic and stochastic motions. However, the definition of persistence in these studies often requires that the population trajectories remain uniformly bounded away from the extinction state. For many stochastic models where the vagaries of the environment are encapsulated in randomly varying parameters, this requirement is too strong~\citep{gillespie-73,chesson-warner-81,turelli-81,chesson-94,ellner-sasaki-96,bjornstad-grenfell-01,kuang-chesson-08,kuang-chesson-09}.  For these models, population trajectories often drift arbitrarily close to the extinction set. However, under certain conditions, there may be a probabilistic tendency to stay away from this extinction set~\citep{chesson-78,chesson-82}. 

Here, I provide a partial review of the latter approach to  persistence in fluctuating environments. The main class of stochastic difference equations under consideration are introduced in Section 2. Definitions of boundedness and persistence are given for stochastic difference equations. For the former, a rather general theorem is presented. For the latter, results are  more dependent on model structure. Consequently, Sections 3, 4, and 5 discuss results for scalar models, multispecies models, and structured single species models, respectively. Section 6 concludes with parting comments and suggestions for future research.

\section{Background}

To study population dynamics in a random environment, consider stochastic difference equations of the form
\begin{equation}\label{eq:background1}
X_{t+1}=F(X_t,\xi_{t+1})
\end{equation}
where $X_t\in \S$ represents the ``state'' of the population at time $t$ (e.g. a vector of densities or frequencies) and $\xi_{t}$ is a random variable that determines the ``environmental conditions'' at time $t$. Throughout this article, I make \emph{the following standing assumptions}. 
\begin{description}
\item [A1]  $\{\xi_t\}_{t=0}^\infty$ is a sequence of i.i.d random variables taking values in a separable metric space $\Omega$ (such as $\R^n$).
\item [A2] $F:\S\times \Omega \to \S$ is a continuous function where $\S$ is a closed subset of $\R^k$. 
\item [A3] There is a closed subset $\S_0\subset \S$ such that $\S_0$ and $\S\setminus \S_0$ are invariant i.e. $F(x,\omega)\in \S_0$ if and only if $x\in \S_0$ and $F(x,\omega)\in \S\setminus \S_0$ if and only if $x\in \S\setminus \S_0$.  
\end{description}
Assumptions \textbf{A1}--\textbf{A2} imply that $\{X_t\}_{t=0}^\infty$ is a Markov chain on $\S$ and that $\{X_t\}_{t=0}^\infty$ is Feller, meaning $x\mapsto \E[h(X_1)|X_0=x]$ is bounded and continuous whenever $h:\S\to\R$ is bounded and continuous. For the many of the results presented here, $\S$ is either the non-negative orthant $\R^k_+$ of $\R^k$ in which case $x \in \S$ is a vector of population densities or $\S$ is the probability simplex $\Delta=\{x\in \R^k_+: \sum_i x_i=1\}$ in which case $x\in \S$ is a vector of population frequencies. $\S_0$ in assumption \textbf{A3} is interpreted as the ``extinction set''  where one or more populations have gone extinct. The invariance of $\S_0$ implies that once the population has gone extinct it remains extinct i.e. the ``no cats, no kittens'' principle in population biology. Alternatively, the invariance of $\S\setminus \S_0$ implies that populations can not go extinct in one time step but only asymptotically, an assumption met by most of the models in the population models in the literature. In particular, these models do not account for demographic stochasticity which stems from the finiteness of populations. 

It is natural to study the asymptotic behavior of \eqref{eq:background1} from two perspectives. First, one might ask ``what is the probability the populations are in a particular configuration far into the future?'' More precisely, given a Borel set $B\subset \S$, what can we say about $\P[X_t\in B]$ (i..e. the probability $X_t \in B$) for large $t$? Since these probabilities correspond to the frequency of observing a particular event across many realizations of the stochastic process, answering this question provides  an ``ensemble view'' of the long-term dynamics.  An alternative perspective corresponds to asking ``how frequently does the typical population trajectory  visit a particular configuration far into the future?'' To answer this question, it useful to introduce the \emph{empirical measures} for the Markov chain $X_t$ given by
\[
\Pi_t = \frac{1}{t}\sum_{s=0}^{t-1} \delta_{X_s}
\]
where $\delta_x$ denotes a Dirac measure at the point $x$ i.e. $\delta_x(A)=1$ if $x\in A$ and $0$ otherwise. For any Borel set $A\subset \S$, $\Pi_t(A)$ is the fraction of time that $X_s$ spends in $A$ for $1\le s\le t$. Provided the limit exists, the long-term frequency  that $X$ enters $A$ is given by $\lim_{t\to\infty} \Pi_t(A)$. Understanding this asymptotic behavior with probability one corresponds to the ``typical trajectory'' perspective.

 This review focuses on two aspects of the asymptotic behavior: boundedness in which the populations tend to stay bounded and persistence in which the populations tend to stay away from the extinction set $\S_0$. Both aspects are viewed from the ``ensemble'' and ``typical trajectory'' perspectives.

\subsection{Boundedness}

When studying models of population dynamics, the first question that  comes to mind is ``are the long term population abundances bounded?'' After all, we live in a finite world so population numbers can not become arbitrary large for indefinitely long periods of time. For deterministic models, an appropriate notion of boundedness is \emph{dissipativeness}: the existence of a compact set $C\subset \S$ such that all solutions of \eqref{eq:background1} eventually enter and remain in $C$ for all future time. While  trivially met when $\S$ is compact, this notion of boundedness is, in general, too strong for stochastic models with non-compact $\S$. For example, theoretical population biologists often use models of the form $X_{t+1} = \xi_{t+1} X_t f(X_t)$ where $f$ is a positive function representing the survivorship and $\xi_t$ is a log-normally distributed random variable representing the mean number of offspring produced by an individual at time $t$. Since log-normal random variables are absolutely continuous on $(0,\infty)$, $X_t$ can become arbitrarily large with positive probability at any time step. While  one might argue that  this behavior is biologically unrealistic, models of this variety have provided many important biological insights and therefore deserve a careful mathematical treatment. 

One less restrictive notion of boundedness is that the probability  $X_t$ gets arbitrarily large becomes vanishingly small~\citep{meyn-tweedie-09}: 
\begin{definition} The Markov chain \eqref{eq:background1} is \emph{bounded in probability} if for all $\epsilon>0$ there exists a compact set $C\subset \S$ such that 
\begin{equation}
\liminf_{t\to\infty} \P[ X_t \in C|X_0=x] \ge 1-\epsilon
\end{equation}
for all $x\in \S$. 
\end{definition}
This definition of boundedness implies that across many realizations of the dynamics of \eqref{eq:background1}, there is a small probability that populations lie outside a compact set far in the long term. The standard definition of bounded in probability allows for the compact sets $C$ to depend on $x$ as well as $\epsilon$.  However, for most applications, it is more natural to require this stronger definition which ensures tendency for remaining bounded is independent of initial conditions. 

Alternatively, the empirical measure point of view  insists that the fraction of time the populations spend at arbitrarily high densities becomes vanishingly small.
\begin{definition} The Markov chain \eqref{eq:background1} is \emph{almost surely bounded on average} if for all $\epsilon>0$ there exists a compact set $C\subset \S$ such that 
\begin{equation}
\liminf_{t\to\infty} \Pi_t[C] \ge 1-\epsilon \mbox{ almost surely}
\end{equation}
whenever $X_0=x\in \S$. 
\end{definition}
The \emph{average} in this definition refers to the temporal average in the definition of the empirical measures $\Pi_t$. This terminology follows from a weaker notion of boundedness used in the Markov chain literature~\citep{meyn-tweedie-09}:
\begin{definition} The Markov chain \eqref{eq:background1} is \emph{bounded in probability on average} if for $\epsilon>0$ there exists a compact set $C\subset \S$ such that 
\begin{equation}
\liminf_{t\to\infty} \frac{1}{t}\sum_{i=0}^{t-1} \P[X_t \in C|X_0=x] \ge 1- \epsilon
\end{equation} 
for all $x\in \S$. 
\end{definition}
\noindent Both boundedness in probability and almost surely bounded on average imply bounded in probability on average. However, boundedness in probability need not imply almost sure boundedness on average, and vice-versa. Since we can not expect, in general as discussed earlier, that $X_t$ asymptotically remains in a compact set with probability one, \emph{I will refer to almost surely bounded on average as simply almost surely bounded}. 

As in the case of deterministic models, a practical method for verifying both forms of boundedness is finding an appropriate Lyapunov-type function. Recall a that function $V:\S \to \R_+$ is called \emph{proper} if $\lim_{\|x\| \to\infty } V(x) = +\infty$. The following theorem shows that  boundedness follows if there is a proper function $V$ decreasing, on average, along population trajectories whenever population densities are high.  

\begin{theorem}\label{thm:bounded}
Let $V:\S\to\R_+$ be a continuous, proper function. If there exist Borel functions $\alpha,\beta: \Omega \to \R_+$  such that 
\begin{equation}\label{eq:bounded}
V(F(x,\omega))\le \alpha(\omega) V(x)+ \beta(\omega) \mbox{ for all }\omega, x, 
\end{equation}   
$\E[\log \alpha(\xi_t)]<0$, $\E[\log^+\alpha(\xi_t)]<\infty$, and $\E[\log^+ \beta(\xi_t)]<\infty$ where $\log^+(z)=\max\{\log(z),0\}$, then \eqref{eq:background1} is bounded in probability and almost surely bounded. 
\end{theorem}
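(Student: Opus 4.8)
\section*{Proof proposal}

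The plan is to reduce everything to the multiplicative ergodic behaviour of the random walk $\sum_s \log\alpha(\xi_s)$ and then to exploit that $V$ is proper. Write $\alpha_s=\alpha(\xi_s)$, $\beta_s=\beta(\xi_s)$, $V_t=V(X_t)$ and set $A_{i,j}=\prod_{r=i}^{j}\alpha_r$ (with $A_{i,j}=1$ if $i>j$). Iterating \eqref{eq:bounded} along a trajectory gives the pathwise bound
\[
V_t\ \le\ A_{1,t}\,V_0\ +\ \sum_{s=1}^{t}A_{s+1,t}\,\beta_s ,
\]
so it suffices to control the right-hand side. Two elementary facts drive the argument. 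First, since $\E[\log\alpha(\xi_1)]<0$ and $\E[\log^+\alpha(\xi_1)]<\infty$, the strong law of large numbers gives $\tfrac1t\log A_{1,t}\to\E[\log\alpha(\xi_1)]<0$ almost surely, so $A_{1,t}\to0$ exponentially fast almost surely. Second, $\E[\log^+\beta(\xi_1)]<\infty$ forces $\tfrac1u\log^+\beta_u\to0$ a.s.\ by Borel--Cantelli, so each product $A_{1,u-1}\beta_u$ decays exponentially and the series $\sum_{u\ge1}A_{1,u-1}\beta_u$ converges to a finite random variable almost surely.

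For bounded in probability I would use a time-reversal. Because $(\xi_1,\dots,\xi_t)$ is i.i.d., the vector $(\alpha_1,\beta_1,\dots,\alpha_t,\beta_t)$ has the same law as its reversal, and a direct reindexing shows
\[
\sum_{s=1}^{t}A_{s+1,t}\,\beta_s\ \stackrel{d}{=}\ \sum_{u=1}^{t}A_{1,u-1}\,\beta_u .
\]
The right-hand series is monotone in $t$ and, by the second fact above, converges a.s.\ to a finite limit; together with $A_{1,t}V(x)\to0$ this shows that the upper bound converges in distribution to a finite random variable $V_\infty$ whose law does not depend on the initial state $x$. By the portmanteau theorem, $\limsup_{t}\P[V_t\ge M\mid X_0=x]\le\P[V_\infty\ge M]$ for every $x$, and since $V$ is proper the sublevel set $C=\{V\le M\}$ is compact; choosing $M$ with $\P[V_\infty\ge M]\le\epsilon$ yields the strong, initial-condition-independent form of boundedness in probability.

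For the almost sure bound I would pass to a stationary version of the noise. Extending $\{\xi_s\}$ to a two-sided i.i.d.\ sequence, set $R_t=\sum_{s\le t}A_{s+1,t}\beta_s$; the two facts above make $R_t$ a well-defined, finite, stationary and ergodic sequence (a fixed measurable function of the ergodic shift), and the dominating affine process $\hat V_{t+1}=\alpha_{t+1}\hat V_t+\beta_{t+1}$ with $\hat V_0=V_0$, which satisfies $\hat V_t\ge V_t$ pathwise, obeys $\hat V_t-R_t=A_{1,t}(\hat V_0-R_0)\to0$ almost surely. Birkhoff's ergodic theorem gives $\tfrac1t\sum_{s=0}^{t-1}\mathbf 1\{R_s> M\}\to\P[R_0>M]$ a.s.; squeezing the indicators of $V_s\le\hat V_s$ against those of $R_s$ via $\hat V_t-R_t\to0$ shows $\limsup_t\Pi_t(\{V>M\})\le\P[R_0\ge M]$ almost surely. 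Since $R_0<\infty$ a.s., choosing $M$ large makes this at most $\epsilon$, giving $\liminf_t\Pi_t(C)\ge1-\epsilon$ a.s.\ for the same compact $C$.

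The main obstacle, and the reason the proof is not a one-line drift computation, is that there are no moment (only logarithmic) bounds on $\alpha$ and $\beta$, so one cannot take expectations in \eqref{eq:bounded} or run the usual $\E[V(X_1)\mid X_0=x]\le\rho V(x)+K$ argument. The work is therefore in taming the accumulated-noise term $\sum_s A_{s+1,t}\beta_s$: in forward time its summands keep changing and it need not converge, and the two devices above --- the i.i.d.\ reversal, to obtain convergence in distribution, and the stationary coupling with Birkhoff, to obtain the almost sure average --- are precisely what convert the negative mean of $\log\alpha$ into the two desired forms of boundedness.
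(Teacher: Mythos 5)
Your proof is correct, and at the top level it rests on the same reduction as the paper's: dominate $V(X_t)$ pathwise by the affine recursion $\hat V_{t+1}=\alpha_{t+1}\hat V_t+\beta_{t+1}$ (your $\hat V_t$ is exactly the paper's $Z_t$), then transfer the limiting behaviour of that recursion to $X_t$ through the properness of $V$. The difference is what you do with the recursion. The paper treats it as a black box: it cites Theorem 2.1 of Diaconis and Freedman (1999) on iterated random functions, which under precisely these log-moment hypotheses supplies a random variable $\widehat Z$ with $Z_t\to\widehat Z$ in distribution and $\frac1t\sum_{s\le t}\delta_{Z_s}\to \mathrm{law}(\widehat Z)$ almost surely; these two conclusions give boundedness in probability and almost sure boundedness, respectively. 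You instead prove both conclusions from scratch: the i.i.d. time-reversal identity $\sum_{s\le t}A_{s+1,t}\beta_s\stackrel{d}{=}\sum_{u\le t}A_{1,u-1}\beta_u$ converts the forward perturbation sum into a monotone, a.s. convergent backward sum (this is exactly the backward-iteration device underlying Diaconis--Freedman), and the two-sided stationary extension plus Birkhoff replaces their empirical-measure statement. What your route buys is a self-contained, elementary argument that makes visible where each hypothesis enters (SLLN for $\E[\log\alpha(\xi_1)]<0$, Borel--Cantelli for $\E[\log^+\beta(\xi_1)]<\infty$); what the citation buys is brevity and, implicitly, extra information such as rates of convergence. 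Two small points you should make explicit to fully close your version: the SLLN step must also cover the case $\E[\log\alpha(\xi_1)]=-\infty$, which the hypotheses do not exclude (the one-sided SLLN then gives $\frac1t\log A_{1,t}\to-\infty$, which only helps), and in the ergodic step you should note that $(R_s)_{s\ge 0}$ is ergodic because it is a measurable factor of the two-sided i.i.d. shift, so Birkhoff indeed yields $\frac1t\sum_{s<t}\mathbf 1\{R_s>M\}\to\P[R_0>M]$ almost surely.
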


\begin{remark} An alternative proof, to the one given below, for the case of almost sure boundedness  was given by \citet[Proposition 4]{tpb-09}.
\end{remark}

\begin{proof} Define 
\[
Y_t = V(X_t), \mbox{  } \alpha_t=\alpha(\xi_t) \mbox{, and }\beta_t =\beta(\xi_t).
\]
Define $Z_t$ iteratively by $Z_0=Y_0$ and 
\[
Z_{t+1}=\alpha_{t+1} Z_t + \beta_{t+1}
\]
Theorem 2.1 in \citep{diaconis-freedman-99} implies there exists a non-negative random variable $\widehat Z$ such that $Z_t$ converges in probability to $\widehat  Z$ and the empirical measures $\frac{1}{t}\sum_{s=1}^t \delta_{Z_s}$ converge almost surely to the distribution of $\widehat Z$. 

Equation \eqref{eq:bounded} and the definition of $Z_t$ implies that $Z_t \ge Y_t \ge 0$ for all $t$. Given $\epsilon>0$, choose $a>0$ such that $\P[\widehat Z \in [0,a]]\ge 1-\epsilon/2$. Since $Z_t$ converges in probability to $\widehat Z$, there exists $T>0$ such that
\[
\P[X_t\in V^{-1}([0,a])] = \P[Y_t \in [0,a]] \ge \P[Z_t \in [0,a]] \ge 1-\epsilon 
\]
for all $t\ge T$. Since $V$ is proper, $V^{-1}([0,a])$ is compact. Therefore, $X_t$ is bounded in probability. 

Similarly, with probability one, 
\begin{eqnarray*}
\liminf_{t\to\infty} \Pi_t ( V^{-1}([0,a])) &=& \liminf_{t\to\infty} \frac{1}{t} \sum_{s=1}^t \delta_{Y_s}([0,a])\\
&\ge & \liminf_{t\to\infty} \frac{1}{t} \sum_{s=1}^t \delta_{Z_s}([0,a])=\P[\widehat Z\in [0,a]] \ge 1-\epsilon/2
\end{eqnarray*}
Therefore $X_t$ is almost surely bounded. 
\end{proof}

Boundedness in probability on average combined with the Feller property ensures the existence of an \emph{invariant probability measure}: a Borel probability measure $\mu$ on $\S$ such that if $X_0$ is distributed according to $\mu$ (i.e. $\P[X_0\in A] = \mu(A)$ for all Borel sets $A\subset \S$), then it is distributed according to $\mu$ for all time i.e. $\P[X_n\in A]=\mu (A)$ for all Borel $A\subset \S$. The proof of the following proposition follows from a standard argument, see e.g. \citet[Proposition 6.1.8]{duflo-97}. One can think of it as the stochastic analog of the fact that the $\omega$-limit set for a point is non-empty for dissipative maps. 

\begin{proposition}\label{prop:inv} If the Markov chain \eqref{eq:background1} is  bounded in probability on average, then the set of weak* limit points of $\frac{1}{t} \sum_{s=0}^{t-1} \P[X_s \in \cdot| X_0=x]$ with $x\in \S$ is non-empty and each of these limit points is an invariant probability measure. Alternatively, if the Markov chain \eqref{eq:background1} is almost surely bounded, then the set of weak* limit points of $\Pi_t$ with $X_0=x\in \S$ is almost surely non-empty and each of these limit points is  an invariant probability measure. \end{proposition}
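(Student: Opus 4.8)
The plan is to run the Krylov--Bogolyubov averaging argument, using the Feller property to transport invariance across weak* limits. Throughout, let $Pf(x)=\E[f(X_1)\mid X_0=x]$ denote the transition operator acting on the space $C_b(\S)$ of bounded continuous real functions; by the Feller property $Pf\in C_b(\S)$ whenever $f\in C_b(\S)$. The key reformulation is that a Borel probability measure $\mu$ is invariant if and only if $\int Pf\,d\mu=\int f\,d\mu$ for every $f\in C_b(\S)$. Thus the entire proof reduces to (i) producing limit points by a compactness argument and (ii) verifying this identity for each of them.

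For the ensemble statement, set $\nu_s=\P[X_s\in\cdot\mid X_0=x]$ and $\mu_t=\frac1t\sum_{s=0}^{t-1}\nu_s$. Boundedness in probability on average is precisely the statement that $\{\mu_t\}$ is tight, so by Prokhorov's theorem it is relatively compact in the weak* topology and its set of limit points is non-empty. For invariance I would use the telescoping identity: the Markov property gives $\int Pf\,d\nu_s=\int f\,d\nu_{s+1}$, whence
\[
\int Pf\,d\mu_t-\int f\,d\mu_t=\frac1t\Big(\int f\,d\nu_t-\int f\,d\nu_0\Big),
\]
which is bounded in absolute value by $2\|f\|_\infty/t\to0$. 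Selecting a weakly convergent subsequence $\mu_{t_k}\to\mu$ and using that both $f$ and $Pf$ lie in $C_b(\S)$, the left-hand side converges to $\int Pf\,d\mu-\int f\,d\mu$, which must therefore vanish; hence $\mu$ is invariant.

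The almost-sure statement follows the same skeleton with an extra stochastic ingredient. Almost sure boundedness yields a.s.\ tightness of $\{\Pi_t\}$, hence a.s.\ relative compactness and a.s.\ non-empty limit sets. To compare $\int Pf\,d\Pi_t=\frac1t\sum_{s=0}^{t-1}Pf(X_s)$ with $\int f\,d\Pi_t$, I would introduce the martingale differences $D_{s+1}=f(X_{s+1})-Pf(X_s)$ relative to the natural filtration $\mathcal{F}_s=\sigma(X_0,\dots,X_s)$, which satisfy $|D_{s+1}|\le 2\|f\|_\infty$. A short rearrangement gives
\[
\int Pf\,d\Pi_t-\int f\,d\Pi_t=\frac1t\big(f(X_t)-f(X_0)\big)-\frac1t\sum_{s=0}^{t-1}D_{s+1},
\]
where the boundary term is $O(1/t)$. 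Since the increments are uniformly bounded, $\sum_s\E[D_{s+1}^2]/s^2<\infty$, so the martingale strong law of large numbers (the $L^2$ martingale convergence theorem together with Kronecker's lemma) forces $\frac1t\sum_{s=0}^{t-1}D_{s+1}\to0$ almost surely, and therefore $\int Pf\,d\Pi_t-\int f\,d\Pi_t\to0$ a.s.

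The main obstacle is the measure-theoretic bookkeeping required to make almost-sure invariance hold \emph{for all} test functions simultaneously, since the exceptional null set above a priori depends on $f$. To handle this I would fix a countable family $\{f_j\}\subset C_b(\S)$ that is measure-determining (separating); such a family exists because $\S\subset\R^k$ is separable, e.g.\ a countable collection of bounded Lipschitz functions built from a dense set. Taking the countable union of the null sets on which $\frac1t\sum_{s}D_{s+1}$ fails to vanish for some $f_j$, together with the tightness event, yields a single full-measure event. On that event every subsequential weak* limit $\mu$ of $\Pi_t$ satisfies $\int Pf_j\,d\mu=\int f_j\,d\mu$ for all $j$ (using Feller continuity of each $Pf_j$ to pass to the sample-path-dependent convergent subsequences), and the separating property of $\{f_j\}$ upgrades this to $\int Pf\,d\mu=\int f\,d\mu$ for all $f\in C_b(\S)$, so $\mu$ is invariant. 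Verifying that the chosen countable class is genuinely separating and that the limit passage is justified along the random subsequences is the delicate part of the argument.
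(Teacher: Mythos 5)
Your proposal is correct and follows essentially the same route as the paper, which does not write the argument out but instead cites the standard result (Duflo 1997, Proposition 6.1.8) whose proof is exactly this: the Krylov--Bogolyubov tightness-plus-telescoping argument with the Feller property for the ensemble averages, and the bounded-increment martingale strong law applied to a countable measure-determining family of test functions for the empirical measures. The only point worth tightening is that the boundedness hypotheses give \emph{asymptotic} tightness (the compact set works only for $t$ large, depending on $\epsilon$), which upgrades to genuine tightness of the whole sequence because finitely many Borel probability measures on a Polish space are themselves tight.
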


\subsection{Persistence}

When the population dynamics are bounded, population biologists are often interested understanding the conditions ensuring the long-term persistence of the populations.  A natural analog of uniform persistence for stochastic models is given below. For these definitions, it useful to introduce the set of the population states within $\eta>0$ of extinction 
\[
\S_\eta = \{x\in \S: d(x,\S_0) \le \eta\}.
\] 
where $d(x,\S_0)=\min_{y\in \S_0} \| x-y\|$. 

From the ``ensemble'' point of view, the following notion of persistence was introduced by \citet{chesson-82}.
\begin{definition} The Markov chain \eqref{eq:background1} is \emph{persistent in probability} if for all $\epsilon>0$ there exists $\eta>0$ such that 
\begin{equation}
\limsup_{t\to\infty} \P[ X_t \in  \S_\eta |X_0=x] \le\epsilon
\end{equation}
for all $x\in \S\setminus \S_0$. 
\end{definition}
\noindent This definition asserts that reaching low densities or frequencies is very unlikely  in the long term.  The next definition provides the ``typical trajectory'' perspective on persistence. 
\begin{definition} The Markov chain \eqref{eq:background1} is \emph{almost surely persistent} if for all $\epsilon>0$ there exists $\eta>0$ such that 
\begin{equation}
\limsup_{t\to\infty} \Pi_t[ \S_\eta ] \le \epsilon \mbox{ almost surely}
\end{equation}
whenever $X_0=x\in \S\setminus \S_0$. 
\end{definition}
\noindent This definition asserts that the fraction of time a typical population trajectory spends near extinction states is very small.

When \eqref{eq:background1} is  bounded, Proposition~\ref{prop:inv} implies that there exists an invariant probability measure. If in addition \eqref{eq:background1} is persistent, then the following proposition implies that there exists a \emph{positive invariant probability measure} $\mu$ i.e. an invariant probability measure satisfying $\mu(\S_0)=0$. 

\begin{proposition}\label{prop:inv+} If the Markov chain \eqref{eq:background1} is  persistent in probability and bounded in probability, then the set of weak* limit points of $\frac{1}{t} \sum_{s=0}^{t-1} \P[X_s \in \cdot| X_0=x]$ with $x\in \S\setminus \S_0$ is non-empty and each of these limit points is a positive, invariant measure. Alternatively, if the Markov chain \eqref{eq:background1} is almost surely  persistent and almost surely bounded , then the set of weak* limit points of $\Pi_t$ with $X_0=x\in \S\setminus \S_0$ is almost-surely non-empty and each of these limit points is  almost-surely a positive, invariant measure. \end{proposition}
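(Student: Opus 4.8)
The plan is to get non-emptiness and invariance for free from Proposition~\ref{prop:inv} and then to upgrade the conclusion to positivity using only the persistence hypothesis. First I would observe that bounded in probability implies bounded in probability on average, so Proposition~\ref{prop:inv} already guarantees that the weak* limit points of the averaged occupation measures $\nu^x_t := \frac{1}{t}\sum_{s=0}^{t-1}\P[X_s\in\cdot\,|\,X_0=x]$ form a non-empty set, each member of which is an invariant probability measure. The tightness supplied by boundedness is what ensures no mass escapes to infinity, so that along any convergent subsequence $\nu^x_{t_n}\to\mu$ the limit $\mu$ is a genuine probability measure and the convergence is weak convergence of probability measures, for which the Portmanteau theorem is available.

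It then remains only to show $\mu(\S_0)=0$ for every such limit point $\mu$. Fix $\epsilon>0$ and use persistence in probability to choose $\eta>0$ with $\limsup_{t\to\infty}\P[X_t\in\S_\eta\,|\,X_0=x]\le\epsilon$. Since the integrands lie in $[0,1]$, a routine Ces\`aro estimate transfers this bound to the occupation measures, giving $\limsup_{t\to\infty}\nu^x_t(\S_\eta)\le\epsilon$. The decisive step is to pass to the \emph{open} neighborhood $U_\eta:=\{x\in\S: d(x,\S_0)<\eta\}$, which contains $\S_0$ and is contained in $\S_\eta$, and to apply the open-set form of Portmanteau: since $U_\eta$ is open and $\nu^x_{t_n}\to\mu$ weakly, $\mu(U_\eta)\le\liminf_n \nu^x_{t_n}(U_\eta)$. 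Chaining the inclusions $\S_0\subseteq U_\eta\subseteq\S_\eta$ then yields $\mu(\S_0)\le\mu(U_\eta)\le\liminf_n \nu^x_{t_n}(\S_\eta)\le\epsilon$. As $\epsilon>0$ was arbitrary, $\mu(\S_0)=0$, so $\mu$ is a positive invariant probability measure.

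For the almost sure statement I would run the same argument pathwise, replacing $\nu^x_t$ by the empirical measures $\Pi_t$ and invoking the almost sure half of Proposition~\ref{prop:inv} for non-emptiness and invariance. The only point needing care is the interchange of the quantifier ``for all $\epsilon$'' with the almost sure clause: rather than choosing $\eta$ after fixing a random limit point, I would fix the countable sequence $\epsilon_k=1/k$, select corresponding $\eta_k$ from almost sure persistence, and intersect the resulting countably many full-measure events. On this single event of probability one, the pathwise bound $\mu(\S_0)\le\mu(U_{\eta_k})\le\limsup_{t\to\infty}\Pi_t[\S_{\eta_k}]\le\epsilon_k$ holds for every $k$ and every weak* limit point $\mu$ of $\Pi_t$, forcing $\mu(\S_0)=0$ almost surely.

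The main obstacle is getting the direction of the weak* inequality right: a direct attempt to bound $\mu(\S_0)$ or $\mu(\S_\eta)$ through the closed-set form of Portmanteau produces the inequality the wrong way around, and the whole argument hinges on enclosing the closed extinction set $\S_0$ in the open set $U_\eta$, whose measure is weak* lower semicontinuous. The one remaining technical nuisance, confined to the almost sure case, is the countable exhaustion used to commute the $\epsilon$-quantifier past the almost sure statement.
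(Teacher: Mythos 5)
Your proposal is correct and follows essentially the same route as the paper: invoke Proposition~\ref{prop:inv} for non-emptiness and invariance, then use persistence to force $\mu(\S_0)=0$ via a Ces\`aro transfer of the bound $\limsup_t\P[X_t\in\S_\eta\,|\,X_0=x]\le\epsilon$ to the averaged measures. In fact you are more careful than the paper at one point: the paper simply asserts ``by weak* convergence $\mu(\S_{\eta_n})\le 1/n$,'' silently skipping the issue that $\S_{\eta_n}$ is closed (so the closed-set Portmanteau inequality points the wrong way), whereas your sandwich $\S_0\subseteq U_\eta\subseteq\S_\eta$ with the open-set form of Portmanteau, and your countable exhaustion $\epsilon_k=1/k$ in the almost-sure case, supply exactly the details needed to make that step rigorous.
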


\begin{proof}
Suppose that the Markov chain \eqref{eq:background1} is persistent in probability and bounded in probability. Let $x\in \S \setminus \S_0$ and assume that $t_k\uparrow \infty$
is such that $\frac{1}{t_k} \sum_{s=0}^{t_k-1} \P[X_s \in \cdot| X_0=x]$ converges in the weak* topology to $\mu$. Proposition~\ref{prop:inv} implies that $\mu$ is invariant. On the other hand, given any natural number $n$,  persistence in probability implies there exists $\eta_n>0$ and $T>0$ such that 
\[
\P[ X_t \in  \S_{\eta_n} |X_0=x]<1/n
\]
for $t\ge T$. Hence, by weak* convergence $\mu(\S_{\eta_n})\le 1/n$ and $\mu(S_0)\le \limsup_{n\to\infty}\mu(S_{\eta_n}) =0$. The proof for the case of almost sure persistence  and almost sure boundedness  is similar.  
\end{proof}

When  a unique positive invariant probability measure exists and the system is  persistent, one can often show that  if $X_0=x\in \S \setminus \S_0$, then the distribution of $X_t$ converges to $\mu$  and $\Pi_t$ converges almost surely to $\mu$.   A powerful tool for verifying this stronger form of persistence is the following theorem due to ~\citet[Chapter 15]{meyn-tweedie-09}. This theorem relies on the concept of \emph{$\varphi$-irreducibility with respect to a Borel set $B\subset \S$}: there exists a Borel measure $\varphi$ on $B$ such that $\varphi(A)>0$ implies that $\P[X_n \in A$ for some $n| X_0=x]>0$ for all $x\in B$.

\begin{theorem} \label{thm:ergodic} Assume the Markov chain \eqref{eq:background1} is $\varphi$-irreducible on $\S\setminus\S_0$ and there exists a positive function $V: \S\setminus \S_0 \to \R_+$, a compact set $C\subset \S\setminus \S_0$, and constant $\beta>0$ such that 
\[
\E[ V(X_1)| X_0=x] \le (1-\beta) V(x) + \mathbf{1}_C(x) \mbox{ for all }x\in \S\setminus \S_0
\]
where $\mathbf{1}_C$ is the indicator function for $C$ i.e. $\mathbf{1}_C(x)=1$ if $x\in C$ and $0$ otherwise.  
Then there exists a unique positive invariant probability measure $\mu$ and the distribution of $X_t$ converges in the weak* topology to $\mu$. Moreover, $\Pi_t$ almost surely converges in the weak* topology to $\mu$ whenever $X_0 =x \in \S \setminus \S_0$. 
\end{theorem}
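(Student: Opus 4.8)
The plan is to recognize this as a verification that the hypotheses of the geometric ergodicity theorem of Meyn and Tweedie hold, rather than as an independent argument. First I would record the ambient structure: since $\S_0$ is closed in $\S$ and $\S$ is closed in $\R^k$, the set $\S\setminus\S_0$ is an open subset of a locally compact space and is therefore itself a locally compact, separable metric space, which is the natural setting for the topological recurrence theory of Markov chains. By assumptions \textbf{A1}--\textbf{A2} the restricted chain is Feller, and by hypothesis it is $\varphi$-irreducible on $\S\setminus\S_0$; these are the two structural inputs the theory requires.

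The second step is to match the drift inequality to Meyn and Tweedie's geometric drift condition (their condition (V4)). Writing $b=1$, the hypothesis $\E[V(X_1)\mid X_0=x]\le(1-\beta)V(x)+b\,\mathbf{1}_C(x)$ is exactly of the form $PV\le(1-\beta)V+b\,\mathbf{1}_C$ with $V$ positive and $\beta\in(0,1)$. The one mismatch is that Meyn and Tweedie phrase all of their drift criteria in terms of \emph{petite} sets, whereas here $C$ is only assumed to be compact. Bridging this gap is the crux of the argument.

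The bridge is to upgrade the Feller property together with $\varphi$-irreducibility to the statement that $\{X_t\}$ is a T-chain, for which every compact set is petite (Meyn and Tweedie, Chapter 6). Once $C$ is known to be petite, the geometric drift condition toward $C$ yields positive Harris recurrence, and hence the existence of a \emph{unique} invariant probability measure $\mu$ with $\mu(V)<\infty$. Because $\S\setminus\S_0$ is invariant (assumption \textbf{A3}) and $\mu$ is supported on the state space of the restricted chain, we obtain $\mu(\S_0)=0$, so that $\mu$ is positive in the required sense.

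Finally, with $C$ petite and the chain aperiodic, the geometric ergodic theorem (Meyn and Tweedie, Theorem 15.0.1) gives convergence of the $t$-step law $\P[X_t\in\cdot\mid X_0=x]$ to $\mu$ in total variation, and total-variation convergence implies weak* convergence of the distribution of $X_t$. The almost sure statement for the empirical measures then follows from the ergodic theorem for positive Harris chains: for every bounded continuous $g$ one has $\Pi_t(g)\to\mu(g)$ almost surely from every $X_0=x\in\S\setminus\S_0$, which is exactly weak* convergence of $\Pi_t$ to $\mu$. I expect the main obstacle to be the petiteness step: verifying the T-chain property (equivalently, producing a nontrivial continuous component for the transition kernel) is where the Feller property and the location of the support of $\varphi$ must be used with care, and it is also where aperiodicity must be checked in order to pass from Ces\`aro averages to convergence of the $t$-step laws themselves.
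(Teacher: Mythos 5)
You have read the situation correctly at the top level: the paper offers no proof of this theorem at all---it is stated as a quotation of Meyn and Tweedie (Chapter 15 of their book)---so reducing it to their geometric ergodicity theory, as you propose, is exactly the intended argument. The difficulty is that the steps you defer as ``obstacles,'' plus one you do not flag, are genuine gaps that cannot be closed from the hypotheses as stated, because the statement as quoted in the review silently drops hypotheses that Meyn and Tweedie's theorem actually requires. First, your compact-to-petite bridge rests on the claim that a Feller, $\varphi$-irreducible chain is a T-chain; this implication is not available. The result you would need (their Theorem 6.0.1) requires in addition that the support of the irreducibility measure have non-empty interior, and nothing in the paper's weak definition of $\varphi$-irreducibility provides this. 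Second, your matching of the drift inequality to condition (V4) overlooks that Meyn and Tweedie require $V\ge 1$, not merely $V>0$, and the difference is fatal: take $\S=\R$, $\S_0=\emptyset$, $X_{t+1}=X_t+\xi_{t+1}$ with $\xi_t$ Gaussian of mean $1$ and variance $1$, and $V(x)=e^{-x}$. The chain is Feller and $\varphi$-irreducible with respect to Lebesgue measure, and
\[
\E[V(X_1)\mid X_0=x]=e^{-x}\,\E[e^{-\xi_1}]=e^{-1/2}\,V(x),
\]
so the drift inequality holds with $1-\beta=e^{-1/2}$ and any compact $C$; yet $X_t\to+\infty$ almost surely, the chain is transient, and no invariant probability measure exists. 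Third, aperiodicity is not derivable, and the convergence-in-law conclusion genuinely fails without it: on $\S=\{0,1\}$, $\S_0=\emptyset$, $F(x,\omega)=1-x$, with $V\equiv 1$, $C=\S$, $\beta=1/2$, and $\varphi$ the counting measure, every stated hypothesis holds and the unique invariant probability measure is $\frac{1}{2}(\delta_0+\delta_1)$, but the law of $X_t$ alternates between $\delta_0$ and $\delta_1$ and does not converge.

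The honest outcome of your plan is therefore not a proof of the literal statement but a proof under the additional hypotheses that $V\ge 1$ (or is proper), that $C$ is petite (for instance via the interior-support condition and the T-chain machinery), and that the chain is aperiodic---which is precisely Meyn and Tweedie's theorem, and evidently what the review intends to quote. Your instinct about where the difficulties lie was exactly right; what is missing is the recognition that they are not technical verifications to be carried out but hypotheses that must be added. If you want to salvage as much as possible from the stated assumptions, note that once existence of an invariant probability measure is secured (for example by properness of $V$ together with Proposition~\ref{prop:inv}), $\varphi$-irreducibility alone yields uniqueness, and the law of large numbers for positive Harris chains yields the almost sure weak* convergence of $\Pi_t$ even in the periodic case; it is only the convergence of the law of $X_t$ itself that is irretrievably lost without aperiodicity, and the entire conclusion that is lost if $V$ is allowed to vanish at infinity.
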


A drawback of requiring $\varphi$-reducibility is that it can be  difficult to verify or demonstrably false for important classes of tractable models. For instance, many important biological insights have been gleamed from models where there are a finite number of environmental states (i.e. $\Omega$ is a finite set). These models rarely satisfy the irreducibility condition and, consequently, may not have a unique positive invariant measure. 

In the next three sections, I review results for persistence of scalar single species models, multiple species models, and structured species models (e.g. spatial, age, or size structure).

\section{Scalar models} 

The simplest forms of the Markov chain \eqref{eq:background1} are the scalar models describing the dynamics of an unstructured, closed population i.e. $k=1$, $\S=[0,\infty)$, $\S_0=\{0\}$, in which case \eqref{eq:background1} simplifies to 
\begin{equation}\label{eq:scalar}
X_{t+1}=f(X_t, \xi_{t+1}) X_t \mbox{ with } X_t\in [0,\infty)
\end{equation}
where $f(x,\omega):[0,\infty)\times \Omega \to(0,\infty)$ is a continuous function. Random difference equations of this form have been studied extensively by many authors~\citep{athreya-dai-00,athreya-dai-02,athreya-schuh-03,bezandry-etal-08,bhattacharya-majumdar-04,ble-etal-07,chesson-82,ellner-84,fagerholm-hognas-02,gyllenberg-etal-94a,haskell-sacker-05,vellekoop-hognas-97}. Here, I focus on results that relate to persistence and boundedness. 

Reasonably general criterion for  extinction, persistence, and population explosion are given by the following Theorem. The proof of extinction and explosion follows from standard arguments that have been used by many authors~\citep{chesson-82,ellner-84,fagerholm-hognas-02,gyllenberg-etal-94b,vellekoop-hognas-97}. The argument for  boundedness  follows from Theorem~\ref{thm:bounded} and persistence follows from the univariate version of Theorem 1 in \citep{jmb-11}.

\begin{theorem}\label{thm:scalar} Assume $f(x,\omega)$ is a positive decreasing function in $x$ and  $\E[\log^+ f(0,\xi_t)]<\infty$. Then
\begin{enumerate}
\item[(i)] if $\E[\log f(0,\xi_t)]<0$,  then $\lim_{t\to\infty} X_t=0$ for all $x\ge 0$,
\item[(ii)] if $\lim_{x\to\infty} \E[\log f(x,\xi_t)]>0$,  then $\lim_{t\to\infty} X_t=\infty$ for all $x>0$, and
\item[(iii)]  if $\E[\log f(0,\xi)]>0$ and $\lim_{x\to\infty} \E[\log f(x,\xi_t)]<0$, then \eqref{eq:scalar} is bounded in probability, almost surely bounded, and almost surely persistent . 
\end{enumerate}
\end{theorem}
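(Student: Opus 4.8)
My plan is to pass everything to logarithms and exploit monotonicity. Writing $\log X_t=\log X_0+\sum_{s=1}^t\log f(X_{s-1},\xi_s)$ and using that $f(\cdot,\omega)$ is decreasing, I get the pointwise sandwich $f_\infty(\omega)\le f(x,\omega)\le f(0,\omega)$ for every $x\ge 0$, where $f_\infty(\omega):=\lim_{y\to\infty}f(y,\omega)=\inf_y f(y,\omega)$. Iterating the recursion, the two envelope bounds become $X_0\prod_{s=1}^t f_\infty(\xi_s)\le X_t\le X_0\prod_{s=1}^t f(0,\xi_s)$, and the strong law of large numbers (SLLN) will convert each into a statement about $\frac1t\log X_t$.

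For (i) I would use only the upper envelope: $\frac1t\log X_t\le\frac1t\log X_0+\frac1t\sum_{s=1}^t\log f(0,\xi_s)$. Since $\E[\log^+ f(0,\xi_t)]<\infty$, the SLLN (in the form that allows a $-\infty$ mean) gives that the right-hand side tends to $\E[\log f(0,\xi_t)]<0$ almost surely, so $\log X_t\to-\infty$, i.e. $X_t\to 0$; the case $x=0$ is trivial by invariance. For (ii) I would use the lower envelope. The crux is the identity $\E[\log f_\infty(\xi_t)]=\lim_{x\to\infty}\E[\log f(x,\xi_t)]>0$: writing $\log f(x,\xi)$ as positive minus negative part, dominated convergence (dominated by the integrable $\log^+ f(0,\xi)$) handles the decreasing positive part while monotone convergence handles the increasing negative part, and the decreasing function $x\mapsto\E[\log f(x,\xi_t)]$ forces its limit to equal $\E[\log f_\infty(\xi_t)]$. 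With $\log f_\infty$ thus integrable of positive mean, the SLLN gives $\frac1t\log X_t\ge\frac1t\log X_0+\E[\log f_\infty(\xi_t)]+o(1)\to+\infty$, hence $X_t\to\infty$ for $x>0$.

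For the boundedness half of (iii) I would apply Theorem~\ref{thm:bounded} with $V(x)=x$. Since $\lim_{x\to\infty}\E[\log f(x,\xi_t)]<0$ and $x\mapsto\E[\log f(x,\xi_t)]$ is decreasing, I may fix $M$ so large that $\E[\log f(M,\xi_t)]<0$ and then take $\alpha(\omega)=f(M,\omega)$ and $\beta(\omega)=M f(0,\omega)$. Monotonicity of $f$ gives $f(x,\omega)x\le\alpha(\omega)x$ for $x\ge M$ and $f(x,\omega)x\le f(0,\omega)M=\beta(\omega)$ for $x<M$, so $V(F(x,\omega))\le\alpha(\omega)V(x)+\beta(\omega)$ for all $x,\omega$. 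The required hypotheses $\E[\log\alpha]=\E[\log f(M,\xi_t)]<0$, $\E[\log^+\alpha]\le\E[\log^+ f(0,\xi_t)]<\infty$, and $\E[\log^+\beta]\le\log^+ M+\E[\log^+ f(0,\xi_t)]<\infty$ then hold, yielding both boundedness in probability and almost sure boundedness.

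For almost sure persistence I would invoke the univariate case of Theorem 1 in \citep{jmb-11}. Boundedness being already in hand, the remaining input is the boundary growth rate: here $\S_0=\{0\}$, its unique invariant probability measure is $\delta_0$, and the associated per-capita growth rate is $\int\E[\log f(x,\xi_t)]\,\delta_0(dx)=\E[\log f(0,\xi_t)]>0$. This positivity is precisely the condition that makes $\delta_0$ repelling and lets the cited theorem conclude that $\limsup_{t\to\infty}\Pi_t[\S_\eta]\le\epsilon$ almost surely for a suitable $\eta$. I expect this last step to be the main obstacle. Unlike (i) and the boundedness claim, almost sure persistence cannot be read off from a one-sided envelope, because near $0$ the trajectory feels the full nonlinearity of $f$; one must control the empirical occupation of a neighborhood of $0$ by combining the positive boundary rate with the martingale fluctuations of $\sum_s\bigl(\log f(X_{s-1},\xi_s)-\E[\log f(X_{s-1},\xi_t)\mid\mathcal F_{s-1}]\bigr)$, and matching the precise integrability- and irreducibility-free hypotheses of \citep{jmb-11} is where the genuine work resides.
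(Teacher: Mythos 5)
Your proposal follows essentially the same route as the paper's proof: the monotone envelope bounds $f(x,\omega)\le f(0,\omega)$ and $f(x,\omega)\ge f_\infty(\omega)$ plus the strong law of large numbers for (i) and (ii), an application of Theorem~\ref{thm:bounded} with $V(x)=x$, $\alpha(\omega)=f(M,\omega)$, $\beta(\omega)=Mf(0,\omega)$ for boundedness, and an appeal to the univariate case of Theorem 1 of \citep{jmb-11} for almost sure persistence, where the paper likewise notes that the compactness hypothesis is only needed for $\S_0=\{0\}$. The only real difference is that you explicitly justify the interchange $\lim_{x\to\infty}\E[\log f(x,\xi_t)]=\E[\log f_\infty(\xi_t)]$ via dominated/monotone convergence, a step the paper's proof of (ii) uses implicitly.
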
 

The assumption that $f(x,\omega)$ is decreasing with $x$ holds for many ``classical'' single species models. However, this assumption doesn't hold for species exhibiting an Allee effect~\citep{courchamp-etal-99}. \citet{ellner-84}  proved results for models where $x\mapsto f(x,\omega)$ is not monotonic and $x\mapsto xf(x,\omega)$ is  increasing.

\begin{proof}
Assume that $\E[\log f(0,\xi)]<0$ and $X_0>0$. Then by the Strong Law of Large Numbers, 
\begin{eqnarray*}
\limsup_{t\to\infty} \frac{1}{t}\log X_t &=& \limsup_{t\to\infty}\frac{1}{t}\left(\sum_{s=0}^{t-1} \log f(X_s,\xi_{s+1})+\log X_0\right)\\
&\le & \lim_{t\to\infty}\frac{1}{t}\left(\sum_{s=0}^{t-1} \log f(0,\xi_{s+1})+\log X_0\right)=\E[\log f(0,\xi_t)]<0\\
\end{eqnarray*}
with probability one. Hence, $\lim_{t\to\infty} X_t =0$ with probability one. 

Assume that $\lim_{x\to\infty}\E[\log f(x,\xi)]>0$ and $X_0>0$. Then by the Strong Law of Large Numbers, 
\begin{eqnarray*}
\liminf_{t\to\infty} \frac{1}{t}\log X_t &=& \liminf_{t\to\infty}\frac{1}{t}\left(\sum_{s=0}^{t-1} \log f(X_s,\xi_{s+1})+\log X_0\right)\\
&\ge & \lim_{t\to\infty}\frac{1}{t}\left(\sum_{s=0}^{t-1} \lim_{x\to\infty}\log f(x,\xi_{s+1})+\log X_0\right)=\lim_{x\to\infty}\E[\log f(x,\xi_t)]>0\\
\end{eqnarray*}
with probability one. Hence, $\lim_{t\to\infty} X_t = \infty$ with probability one. 

Assume that $\E[\log f(0,\xi)]>0$ and $\lim_{x\to\infty} \E[\log f(x,\xi_t)]<0$. To verify boundedness, choose $M>0$ and $\epsilon>0$ such that $\E[\log f(x,\xi_t)]\le -\epsilon$ for all $x\ge M$. Define $V:[0,\infty)\to[0,\infty)$ to be the identity function $V(x)=x$. Define $\alpha_{t}=f(M,\xi_{t+1})$. Define $\beta_{t}=f(0,\xi_{t})M$. Since $f$ is decreasing in $M$, $X_{t+1}\le \alpha_{t+1}X_t$ whenever $X_t\ge M$. On the other hand,  $X_{t+1}\le f(0,\xi_{t+1})M =\beta_{t+1}$ whenever $X_t\le M$. Since $\E[\log^+ \beta_t ] <\infty$ by assumption and $\E[\log \alpha_t] \le -\epsilon$ by definition, Theorem~\ref{thm:bounded} implies that $X_t$ is bounded in probability and almost surely bounded . The proof of  almost surely persistent  follows verbatim as in the proof of  \citep[Theorem 1]{jmb-11}. The compactness assumption in this proof is only needed for $\S_0$ which equals $\{0\}$ for \eqref{eq:scalar}.
\end{proof}

Theorem~\ref{thm:scalar} suggests the following conjecture. 

\begin{conjecture}
Under the same assumptions of Theorem~\ref{thm:scalar}(iii), \eqref{eq:scalar} is persistent in probability. 
\end{conjecture}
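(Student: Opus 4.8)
The plan is to establish \emph{persistence in probability} under the hypotheses of Theorem~\ref{thm:scalar}(iii) by combining the two facts we have already extracted for that regime: the chain is bounded in probability (hence bounded in probability on average), and the growth rate at extinction is strictly positive, $\E[\log f(0,\xi)]>0$. Boundedness in probability on average together with the Feller property gives, via Proposition~\ref{prop:inv}, a nonempty set of invariant probability measures, and the positive growth rate at $0$ should force every such invariant measure to avoid the extinction state $\S_0=\{0\}$. The key analytic device is the fact that along trajectories, $\log X_{t+1}-\log X_t = \log f(X_t,\xi_{t+1})$, so that $\frac1t\log X_t$ is a time-average of $\log f$ sampled at the visited states; I would exploit this additive structure together with ergodic-type averaging against invariant measures.

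The main steps, in order, are as follows. First I would record that every invariant probability measure $\mu$ is \emph{positive}, i.e. $\mu(\{0\})=0$. This follows because if $\mu$ gave mass to $\{0\}$ it would (by invariance and the ``no cats, no kittens'' structure, \textbf{A3}) have to concentrate on $\{0\}$ in some ergodic component, but the positive growth rate $\E[\log f(0,\xi)]>0$ means the linearization at $0$ is expanding, so $0$ is a repeller in the mean-log sense and cannot support an invariant measure on $\S\setminus\S_0$. Second, I would prove a uniform lower bound on the invariant mass away from extinction: there exists $c>0$ such that $\mu(\S_\eta)\to 0$ as $\eta\to0$ \emph{uniformly} over all invariant measures $\mu$ arising as weak* limits. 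The quantitative input here is a Lyapunov-type estimate using $W(x)=\log x$ or, better, a function of the form $W(x)=x^{-p}$ for small $p>0$, showing that $\E[W(X_1)\mid X_0=x]\le (1-\beta)W(x)$ on a neighborhood of $0$; the strict positivity of $\E[\log f(0,\xi)]$ is exactly what makes such a $p$ exist (by a first-order expansion of $\E[f(0,\xi)^{-p}]$ in $p$). Third, I would translate this into the $\limsup$ statement in the definition of persistence in probability: using boundedness in probability to pass to Cesàro limit points of the laws $\P[X_t\in\cdot\mid X_0=x]$, the uniform bound on invariant mass near $0$ controls $\limsup_{t\to\infty}\P[X_t\in\S_\eta\mid X_0=x]$, giving it $\le\epsilon$ once $\eta$ is small enough.

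The hard part will be upgrading the Cesàro/time-averaged control that Proposition~\ref{prop:inv} naturally supplies into a \emph{pointwise-in-$t$} $\limsup$ bound on $\P[X_t\in\S_\eta\mid X_0=x]$, which is what persistence in probability literally demands. The earlier machinery and the invariant-measure arguments give information about $\frac1t\sum_{s<t}\P[X_s\in\cdot]$, i.e. averages, whereas almost sure persistence (already proved) is also an averaged statement; extracting a genuine $\limsup$ over individual times $t$ is precisely why this is stated as a \emph{conjecture} rather than a theorem. The natural route is to show that the laws of $X_t$ themselves (not merely their Cesàro averages) are tight and that their weak* limit points are invariant and positive; tightness follows from bounded in probability, and identifying the limit points as invariant requires the Feller property plus continuity of $F$. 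If one can show every such limit point $\nu$ satisfies $\nu(\S_\eta)\to 0$ uniformly as $\eta\to0$ by the same Lyapunov estimate applied to $\nu$ rather than to Cesàro averages, the $\limsup$ bound follows directly. The delicate point is that $X_t$ need not converge in law along the full sequence without an irreducibility or aperiodicity hypothesis of the kind required in Theorem~\ref{thm:ergodic}, so one must argue the uniform near-extinction bound holds for \emph{all} subsequential weak* limits simultaneously, and it is this uniformity over limit points, rather than the existence of a single good invariant measure, that I expect to be the crux.
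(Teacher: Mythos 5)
The statement you are trying to prove is stated in the paper as an open \emph{conjecture}: the paper offers no proof of it, so there is nothing to compare your attempt against, and the only question is whether your sketch closes the gap. It does not, for two concrete reasons, one of which is fatal under the stated hypotheses.

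First, your Step 1 is false as written. Since $\S_0=\{0\}$ is invariant (indeed $F(0,\omega)=0$ for all $\omega$ by \textbf{A3}), the Dirac measure $\delta_0$ is always an invariant probability measure; hence ``every invariant probability measure $\mu$ satisfies $\mu(\{0\})=0$'' cannot hold. Positivity of $\E[\log f(0,\xi_t)]$ makes $0$ a repeller in the mean-log sense, but it does not remove $\delta_0$ from the set of invariant measures. The statement you actually need --- that limit points of the laws started from $x>0$ put no mass near $0$ --- is precisely the content of the conjecture; Proposition~\ref{prop:inv+} delivers it only by \emph{assuming} persistence in probability, so that route is circular. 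Second, and this is the crux, the drift estimate of your Step 2 is not available under the hypotheses of Theorem~\ref{thm:scalar}(iii). To get $\E[W(X_1)\mid X_0=x]\le(1-\beta)W(x)$ near $0$ with $W(x)=x^{-p}$ you need $\E[f(0,\xi_t)^{-p}]\le 1-\beta$, and in particular $\E[f(0,\xi_t)^{-p}]<\infty$, i.e.\ exponential moments of $\log^- f(0,\xi_t)$. The theorem assumes only $\E[\log^+ f(0,\xi_t)]<\infty$ and $\E[\log f(0,\xi_t)]>0$, which forces $\E\left|\log f(0,\xi_t)\right|<\infty$ but nothing stronger: if, say, $\P[\log f(0,\xi_t)=-n]\propto n^{-3}$ for $n\ge 1$ (with a positive atom elsewhere making the mean positive), then $\E[f(0,\xi_t)^{-p}]=\infty$ for every $p>0$. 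So the ``first-order expansion of $\E[f(0,\xi_t)^{-p}]$ in $p$'' has no content --- the quantity being expanded may be identically $+\infty$. This integrability obstruction is exactly why the statement remains a conjecture; with added exponential-moment hypotheses (and a companion estimate controlling the drift of $W$ at $\infty$, where $f(x,\xi)^{-p}$ can also blow up), your drift route is the one that works in the later literature.

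Your final paragraph misdiagnoses the difficulty in both directions. If the geometric drift inequality $\E[W(X_1)\mid X_0=x]\le(1-\beta)W(x)+b\,\mathbf{1}_C(x)$ did hold globally, you would need none of the invariant-measure machinery: iterating gives $\E[W(X_t)\mid X_0=x]\le(1-\beta)^t W(x)+b/\beta$, and Markov's inequality yields $\limsup_{t\to\infty}\P[X_t\in\S_\eta\mid X_0=x]\le (b/\beta)\,\eta^{p}$, a genuinely pointwise-in-$t$ bound requiring no irreducibility and no passage to limit points. Conversely, the replacement route you propose --- subsequential weak* limits of the laws of $X_t$ themselves --- fails at the step you gloss over: such limit points need \emph{not} be invariant (only Cesàro limit points are, Feller property notwithstanding); a deterministic period-two orbit, whose laws alternate between two non-invariant Dirac masses, already shows this. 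So Steps 1 and 3 cannot close the pointwise-in-$t$ gap even in principle, and Step 2, which could, is precisely the step the paper's hypotheses do not support.
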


\begin{example}[The stochastic Hasell model]\label{ex:hasell}{\rm 
To illustrate the utility of Theorem~\ref{thm:scalar},  consider \citet{hassell-75}'s single species model in which $\omega=(\lambda,b)$ and 
\[
f(x,\omega)=\frac{\lambda}{(1+x)^b}
\]
where $\lambda$ is the intrinsic fitness of an individual, and $b>0$ determines the strength of intraspecific competition. For this model, \[
\E[\log f(x,\xi_t)]=\E[\log\lambda_t]-\E[b_t]\log(1+x)\] 
Hence, $\lim_{x\to\infty}\E[\log f(x,\xi_t)]=-\infty$ and $\E[\log f(0,\xi_t)]=\E[\log \lambda_t]$. Theorem~\ref{thm:scalar} implies almost sure extinction if $\E[\log \lambda_t]<0$ and almost sure persistence   if $\E[\log \lambda_t]>0$.$\blacksquare$ }
\end{example}

To arrive at stronger conclusions, more assumptions about either the form of the nonlinearity or the noise are necessary. With respect to the form of the nonlinearity, \citet[Theorem 2.2]{ellner-84} showed that a monotonicity assumption on $xf(x)$ ensures converge in distribution to a positive random variable. In the case of the stochastic Hassell model described in Example~\ref{ex:hasell}, this convergence occurs whenever $b_t \in (0,1]$ and $\E[\log \lambda_t]>0$. \\

\begin{theorem}[Ellner 1984]~\label{thm:ellner} Assume that $F(x,\omega)=xf(x,\omega)$ is continuously differentiable and strictly increasing in $x$, $f(x,\omega)$ is strictly decreasing in $x$, and $\E[\log f(x,\xi_t)]<\infty$ for some $x>0$. If $\E[\log f(0,\xi_t)]>0$ and $\lim_{x\to\infty} \E[\log f(x,\xi_t)]<0$, then there exists a positive invariant probability measure $\mu$ and the distribution of $X_t$ converges in the weak* topology to $\mu$ whenever $X_0=x>0$.
\end{theorem}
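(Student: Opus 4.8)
The plan is to separate the two assertions. Existence of a positive invariant probability measure $\mu$ is essentially already in hand: the hypotheses $\E[\log f(0,\xi_t)]>0$ and $\lim_{x\to\infty}\E[\log f(x,\xi_t)]<0$ are precisely those of Theorem~\ref{thm:scalar}(iii), so \eqref{eq:scalar} is almost surely bounded and almost surely persistent (the boundedness step via Theorem~\ref{thm:bounded}), and Proposition~\ref{prop:inv+} then produces an invariant $\mu$ with $\mu(\{0\})=0$. The real content is \emph{uniqueness} of $\mu$ together with weak* convergence of the law of $X_t$ to $\mu$, and this is where the monotonicity hypotheses do the work.

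The structural observation I would build everything on is that, for each fixed $\omega$, the map $x\mapsto F(x,\omega)=xf(x,\omega)$ is increasing (given) and \emph{sub-homogeneous}, since $F(x,\omega)/x=f(x,\omega)$ is decreasing in $x$. Equipping $(0,\infty)$ with the part metric $d(x,y)=|\log x-\log y|$, these two properties give, for $x>y>0$,
\[
1<\frac{F(x,\omega)}{F(y,\omega)}=\frac{x}{y}\cdot\frac{f(x,\omega)}{f(y,\omega)}<\frac{x}{y},
\]
so that $0<\log F(x,\omega)-\log F(y,\omega)<\log x-\log y$; i.e. every $F(\cdot,\omega)$ is a \emph{strict} contraction in $d$. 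First I would couple two trajectories $X_t,Y_t$ driven by the same noise $\{\xi_t\}$ with $X_0=x\ge y=Y_0$. Order preservation gives $X_t\ge Y_t$ for all $t$, so $D_t:=\log X_t-\log Y_t\ge 0$, and the contraction estimate shows $D_t$ is non-increasing with decrement
\[
D_t-D_{t+1}=\log f(Y_t,\xi_{t+1})-\log f(X_t,\xi_{t+1})\ge 0 .
\]
Hence $D_t\downarrow D_\infty\ge 0$, and the whole game is to prove $D_\infty=0$ almost surely.

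The crux --- and the step I expect to be the main obstacle --- is upgrading this merely \emph{strict} (and non-uniform) contraction to genuine synchronization. The device I would use is to pass to conditional expectations: since $(X_t,Y_t)$ is $\mathcal F_t$-measurable and $\xi_{t+1}$ is independent of $\mathcal F_t$,
\[
\E[D_t-D_{t+1}\mid\mathcal F_t]=\psi(Y_t)-\psi(X_t),\qquad \psi(x):=\E[\log f(x,\xi_t)],
\]
where $\psi$ is continuous, strictly decreasing, with $\psi(0)>0>\lim_{x\to\infty}\psi(x)$. Summing and using $\E[D_0]=\log(x/y)<\infty$ gives $\sum_{t\ge 0}\E[\psi(Y_t)-\psi(X_t)]<\infty$, so the Ces\`aro averages of these nonnegative terms vanish. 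Now suppose $\P[D_\infty\ge\gamma]=p>0$ for some $\gamma>0$. Almost sure boundedness and almost sure persistence force $Y_t$ (and $X_t$) to spend asymptotically at least half the time in a fixed compact interval $[a,b]\subset(0,\infty)$; on the event $\{D_\infty\ge\gamma\}$ one then has, for such $t$, $Y_t\in[a,b]$, $X_t\in[ae^{\gamma},b']$ and $\log X_t-\log Y_t\ge\gamma$, which by continuity and strict monotonicity of $\psi$ on this compact set forces $\psi(Y_t)-\psi(X_t)\ge\delta>0$. A Fatou argument then yields $\liminf_T\frac1T\sum_{t<T}\E[\psi(Y_t)-\psi(X_t)]\ge\frac{\delta}{2}\,p>0$, contradicting the vanishing of the Ces\`aro averages. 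Therefore $D_\infty=0$, i.e. $|\log X_t-\log Y_t|\to0$ almost surely: trajectories from any two positive initial conditions synchronize in the part metric.

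Finally I would convert synchronization into the two stated conclusions. For uniqueness, if $\mu_1,\mu_2$ were both positive invariant measures I would run the above coupling from stationary, monotonically coupled initial data (assuming, or first establishing, $\log$-integrability under invariant measures); synchronization forces $\mu_1=\mu_2$. For weak* convergence from a fixed $x>0$, I would pass to the backward process $\widehat X_t(x)=F(\cdot,\xi_1)\circ\cdots\circ F(\cdot,\xi_t)(x)$, which has the same law as $X_t(x)$ because the $\xi_i$ are i.i.d. Synchronization on compacts (inherited from synchronization of the endpoints $a,b$ via monotone sandwiching) together with boundedness and persistence makes $\{\widehat X_t(x)\}$ almost surely Cauchy in $d$, with a limit $\widehat X_\infty$ independent of $x$; its law is invariant, hence equals $\mu$, and since $X_t(x)\stackrel{d}{=}\widehat X_t(x)\to\widehat X_\infty$ almost surely, the distribution of $X_t$ converges weak* to $\mu$.
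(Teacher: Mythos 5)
The paper never actually proves this theorem: it is imported from Ellner (1984, Theorem 2.2), whose own argument runs through the machinery of monotone Markov chains and random monotone maps (stochastic ordering and backward compositions in the Dubins--Freedman tradition, the toolkit the review gestures at via \citet{chueshov-02}). Your proposal is therefore a genuinely independent route, and its engine is sound: the two monotonicity hypotheses give exactly the strict part-metric contraction $0<\log F(x,\omega)-\log F(y,\omega)<\log x-\log y$; the monotone coupling makes $D_t=\log X_t-\log Y_t$ nonincreasing; and your Ces\`aro--Fatou device for ruling out $\P[D_\infty\ge\gamma]>0$ --- telescoping the conditional decrements against the fraction of time the trajectories must spend in a compact subset of $(0,\infty)$, supplied by Theorem~\ref{thm:scalar}(iii) and Theorem~\ref{thm:bounded} --- is correct and rather elegant. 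Two small repairs: continuity of $\psi$ is not among the hypotheses, but you only need $\inf_{u\in[a,b]}\E\left[\log f(u,\xi_1)-\log f(ue^\gamma,\xi_1)\right]>0$, which follows from strict monotonicity alone via a finite grid $v_i=ae^{i\gamma/2}$ (every $[u,ue^\gamma]$ contains some $[v_i,v_{i+1}]$); and your appeal to Theorem~\ref{thm:scalar} uses $\E[\log^+f(0,\xi_1)]<\infty$, which the review's loose restatement of Ellner's integrability hypotheses does not literally supply.

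The genuine gap is your final step. Forward synchronization does not make the backward iterates $\widehat X_t(x)$ almost surely Cauchy: backward and forward compositions agree only in law, so what you actually get is $d(\widehat X_t(x),\widehat X_t(y))\to 0$ in probability, while Cauchyness in $t$ requires comparing $\widehat X_t$ evaluated at $x$ and at a random inner composition; ``monotone sandwiching'' cannot control this on the noncompact state space, since no compact interval of $(0,\infty)$ is invariant under every $F(\cdot,\omega)$ --- this is precisely where Ellner and Dubins--Freedman have to do real work. Fortunately you do not need the backward process at all: you already have a positive invariant $\mu$ (your existence step) and synchronization for every fixed pair of initial points, hence, by conditioning on the initial pair and Fubini (which also disposes of your worry about log-integrability under invariant measures), for the coupling of $X_0=x$ with a stationary trajectory $Y_0\sim\mu$ driven by the same noise. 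For any bounded $h$ uniformly continuous in the part metric, $\left|\E[h(X_t)]-\int h\,d\mu\right|=\left|\E[h(X_t)-h(Y_t)]\right|\to 0$ by dominated convergence; since the part metric metrizes the usual topology of $(0,\infty)$, this is exactly weak* convergence of the law of $X_t$ to $\mu$, and the same coupling of two stationary trajectories gives uniqueness, as you note. With that substitution your argument becomes a complete, self-contained proof, arguably more elementary than the monotone-Markov-chain route it replaces.
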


The assumption that $x\mapsto F(x,\omega)$  is strictly increasing implies $F(\cdot,\omega)$ is a monotone map for each $\omega$. For deterministic systems, this monotonicity provides a lot of leverage to understand the map dynamics even in higher dimensions, as reviewed in \cite{hirsch-smith-05}. This leverage has been extended to random maps as reviewed by \citet{chueshov-02}. In the special cases of the stochastic Beverton-Holt model $X_{t+1}=\frac{\lambda_{t+1}}{1+a_{t+1}X_t}X_t$  and the stochastic Beverton-Holt model with survivorship $X_{t+1}=\frac{\lambda_{t+1}}{1+a_{t+1}X_t}X_t+s_{t+1}X_t$, a similar result to Theorem~\ref{thm:ellner} was proven by \cite{haskell-sacker-05} and \cite{bezandry-etal-08}, respectively.

When $b_t>1$ for the stochastic Hassell model, monotonicity fails (i.e. the map $x\mapsto x\,f(x,\omega)$ is unimodal) and other assumptions are necessary to ensure convergence to a positive random variable. \citet{vellekoop-hognas-97} proved an ergodic form of  persistence by placing stronger assumptions on the random variables $\xi_t$. Their result is applicable to the Hassell model under the assumption that  $\lambda_t$ is constant. The proof  uses the Lyapunov function characterization of ergodicity described in Theorem~\ref{thm:ergodic}. 

\begin{theorem}[Vellekoop and H\"{o}gn\"{a}s 1997] \label{thm:vh} Assume that 
\[
f(x,\omega)=\lambda\,g(x)^{-\omega}
\]
where $g$ is a positive differentiable function satisfying $x\mapsto xg'(x)/g(x)$ is strictly increasing on $[0,\infty)$. Assume that $\xi_t$ are i.i.d. positive random variables with $\E[\xi_t],\E[\xi_t^2]<\infty$ and a positive  density on $(0,\infty)$. If $\lambda>1$, then there exists a positive invariant probability measure $\mu$, the distribution of $X_t$ converges to $\mu$ whenever $X_0=x>0$, and the empirical measures $\Pi_t$ converge almost surely to $\mu$ whenever $X_0=x>0$.
\end{theorem}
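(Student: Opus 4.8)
The plan is to verify the three conclusions—existence of a positive invariant probability measure $\mu$, weak* convergence of the distribution of $X_t$ to $\mu$, and almost sure convergence of the empirical measures $\Pi_t$ to $\mu$—by checking the hypotheses of Theorem~\ref{thm:ergodic}. That theorem does all the heavy lifting once we supply two ingredients: (1) $\varphi$-irreducibility on $\S\setminus\S_0 = (0,\infty)$, and (2) a Lyapunov function $V$ together with a compact set $C$ and a constant $\beta>0$ satisfying the geometric drift inequality $\E[V(X_1)\mid X_0=x]\le(1-\beta)V(x)+\mathbf 1_C(x)$. So the proof splits naturally into establishing irreducibility and constructing $V$.

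For irreducibility, I would use the standing assumption that $\xi_t$ has a positive density on $(0,\infty)$. Since $f(x,\omega)=\lambda g(x)^{-\omega}$ with $g$ positive and differentiable, for each fixed $x>0$ the map $\omega\mapsto x f(x,\omega)=\lambda x g(x)^{-\omega}$ is continuous and sweeps out an interval of positive-probability image values as $\omega$ ranges over $(0,\infty)$; concretely, $\lambda x g(x)^{-\omega}$ ranges over $(0,\lambda x)$ or $(\lambda x,\infty)$ depending on whether $g(x)>1$ or $g(x)<1$. First I would argue that the one-step transition kernel $P(x,\cdot)$ is absolutely continuous with a density that is positive on an appropriate interval, inherited from the density of $\xi_t$ via the change of variables $y = \lambda x g(x)^{-\omega}$. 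Then I would fix a reference measure $\varphi$ (for instance Lebesgue measure restricted to a suitable compact interval $[x_-,x_+]\subset(0,\infty)$) and show that from any starting point $x>0$ the chain can reach any $\varphi$-positive set in finitely many steps with positive probability, by composing the one-step positivity statements. The main technical care here is to confirm that the images overlap across starting points so that a single $\varphi$ works uniformly on $(0,\infty)$; the condition that $\xi_t$ has full positive density and $g$ is a fixed smooth function should make the accessible region connected and independent of $x$ in the sense required.

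For the drift condition, I would lean on the boundedness machinery already developed. The monotonicity hypothesis that $x\mapsto xg'(x)/g(x)$ is strictly increasing, combined with $\lambda>1$, is exactly the structural input that forces $\E[\log f(x,\xi_t)]$ to be positive for small $x$ and negative for large $x$, so that Theorem~\ref{thm:scalar}(iii)'s log-drift estimates apply. The challenge is that Theorem~\ref{thm:ergodic} demands a \emph{geometric} drift for $\E[V(X_1)\mid X_0=x]$, not merely the additive log-contraction from Theorem~\ref{thm:bounded}. I would look for a Lyapunov function of the form $V(x)=x^p+x^{-q}$ for small exponents $p,q>0$, so that $V$ is proper both at infinity and near the extinction set $0$ (the latter is what pushes trajectories away from $\S_0$ and hence encodes persistence). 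Using $X_1=\lambda x g(x)^{-\xi_1}$ one computes $\E[V(X_1)\mid X_0=x]=(\lambda x)^p\,\E[g(x)^{-p\xi_1}]+(\lambda x)^{-q}\,\E[g(x)^{q\xi_1}]$, and the finiteness of $\E[\xi_t]$ and $\E[\xi_t^2]$ guarantees these moment generating factors are finite and controllable for small $p,q$. The point is that for small $x$ the $x^{-q}$ term contracts (because $g(x)>1$ makes $\E[g(x)^{q\xi_1}]$ small, or more precisely the favorable sign of $\E[\log f]$ at $0$ beats the $\lambda^{-q}$ factor), while for large $x$ the $x^p$ term contracts by the negative log-drift at infinity.

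The step I expect to be the main obstacle is calibrating the exponents $p$ and $q$ so that the geometric factor $(1-\beta)$ holds \emph{simultaneously} outside a single compact set $C\subset(0,\infty)$. This is a standard but delicate perturbative argument: at $x=0$ and $x=\infty$ the relevant expected contraction rates are strictly less than $1$ at the first-order ($\log$) level, and one must choose $p,q$ small enough that the exponential moment corrections (the $\E[\xi_t^2]$ contributions that appear at second order in $p,q$) do not destroy the contraction. I would make this quantitative by expanding $\E[g(x)^{-p\xi_1}]=1-p\log g(x)\,\E[\xi_1]+O(p^2)$ near the two ends, so that $(\lambda)^p\E[g(x)^{-p\xi_1}]=\exp\!\big(p\,\E[\log f(x,\xi_1)]+O(p^2)\big)$, which is strictly below $1$ for $p$ small once $\E[\log f(x,\xi_1)]<0$; the symmetric estimate handles the $x^{-q}$ term near $0$. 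Having secured contraction beyond two thresholds $x_+$ and $x_-$, I would set $C=[x_-,x_+]$, absorb the finite supremum of $\E[V(X_1)\mid X_0=x]$ over $C$ into the indicator term, and conclude the drift inequality, at which point Theorem~\ref{thm:ergodic} delivers all three claimed conclusions.
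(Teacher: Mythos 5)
Your overall skeleton (irreducibility plus a Foster--Lyapunov drift condition, then quoting Theorem~\ref{thm:ergodic}) is exactly the route the paper points to, and your irreducibility half is essentially sound: the positive density of $\xi_t$ makes the one-step kernel absolutely continuous with positive density on $(0,\lambda x)$, and composing steps (taking $\xi$ near $0$ to climb, since $\lambda>1$) reaches any interval of $(0,\infty)$ from any starting point. The genuine gap is in the drift step. With $V(x)=x^p+x^{-q}$ you need
\[
\E[V(X_1)\mid X_0=x]=(\lambda x)^p\,\E\bigl[g(x)^{-p\xi_1}\bigr]+(\lambda x)^{-q}\,\E\bigl[g(x)^{q\xi_1}\bigr],
\]
and the second factor is $\E\bigl[e^{q\xi_1\log g(x)}\bigr]$, an \emph{exponential} moment of $\xi_1$. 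The hypotheses grant only $\E[\xi_t],\E[\xi_t^2]<\infty$, and these control no exponential moment: if $\xi_t$ has density proportional to $(1+t)^{-4}$ on $(0,\infty)$, then $\E[e^{\epsilon\xi_t}]=\infty$ for every $\epsilon>0$. Since $x\mapsto xg'(x)/g(x)$ is strictly increasing from the value $0$ at $x=0$, $g$ is increasing, so (reading the statement with $g(0)=1$, as in the Hassell and Ricker instances) $\log g(x)>0$ for every $x>0$; hence for such $\xi_t$ your left-hand side is identically $+\infty$ for every $q>0$ and every $x>0$. The sentence ``the finiteness of $\E[\xi_t]$ and $\E[\xi_t^2]$ guarantees these moment generating factors are finite and controllable'' is precisely where the argument breaks.

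This is not fixable by recalibrating $p$ and $q$, because under the stated hypotheses the chain need not be geometrically ergodic at all, so no locally bounded test function can satisfy the geometric drift of Theorem~\ref{thm:ergodic}. Indeed, fix $x_0$ with $h(x_0):=\log g(x_0)>0$. In one step the chain crashes to $\log X_1=\log(\lambda x_0)-\xi_1 h(x_0)$, and since $g\ge 1$ forces $X_{t+1}\le\lambda X_t$, the return time to any compact $C\subset(0,\infty)$ is at least $\xi_1 h(x_0)/\log\lambda-O(1)$; thus $\E_{x_0}[\kappa^{\tau_C}]=\infty$ for every $\kappa>1$ whenever $\xi_t$ lacks exponential moments, contradicting the exponential return-time moments that geometric drift implies. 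The repair --- and what the Vellekoop--H\"{o}gn\"{a}s proof actually does --- is to use the weaker additive (positive-recurrence) drift condition $\E[V(X_1)\mid X_0=x]\le V(x)-1+b\,\mathbf{1}_C(x)$ of Meyn--Tweedie, with a \emph{logarithmic} test function, e.g.\ $V(x)=A\max(0,-\log x)+B\log g(x)$: near $0$ the drift is approximately $-A\log\lambda<0$, while for large $x$ one chooses $A/B$ small (using only $\E[\xi_t]<\infty$) so that the expected post-crash value of $V$ falls below $B\log g(x)-1$. Combined with your irreducibility argument and aperiodicity (again from the positive density), this gives positive Harris recurrence, and the ergodic theorem and law of large numbers for Harris chains deliver all three conclusions. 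So keep the irreducibility half, but replace the geometric drift and power-law $V$ by the additive drift and a logarithmic $V$.
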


Since $g(x)=1+x$ satisfies $xg'(x)/g(x)=x/(1+x)$, Theorem~\ref{thm:vh} applies to the stochastic Hassell model with $\lambda_t$ constant and $\xi_t=b_t$.  This theorem is also applicable to the stochastic Ricker equation $X_{t+1}=X_t \exp(r-a_{t+1}X_t)$ where $r>0$ is the intrinsic per-capita growth rate of the population and  $\xi_t = a_t$ measures the intensity of interspecific competition. \citet{gyllenberg-etal-94b} studied the stochastic Ricker model when either $r_t$ or $a_t$ vary randomly. More recently, \citet{fagerholm-hognas-02} studied the dynamics of the stochastic Ricker model when both $r_t$ and $a_t$ vary randomly. Quite surprising, they prove that if $E[r_t]=0$, then \eqref{eq:scalar} is \emph{null recurrent}: there exists no positive  invariant probability measure despite $\P[X_t \in A$ infinitely often $]=1$ for all Borel sets $A$ with positive Lebesgue measure. 

\begin{theorem}[Fagerholm \& H\"{o}gnas 2002]\label{thm:fh} Consider the stochastic Ricker model $X_{t+1}=X_t \exp(r_{t+1}-a_{t+1}X_t)$ where 
\begin{itemize}
\item $r_1,r_2,\dots$ is a sequence of i.i.d. random variables such that $\E[r_t]<\infty$ and $r_t$ has a positive density on $(-\infty,\infty)$,
\item $a_1,a_2,\dots$ is a sequence of positive i.i.d. random variables independent of $r_t$ such that $\E[a_t]<\infty$, and
\item there exists $x_c>0$ such that $\E[\exp(r_1 x)]<\infty$ for all $0\le x \le x_c$. 
\end{itemize}
Then one of the following statements holds
\begin{description}
\item[extinction] If $\E[r_t]<0$, then $X_t$ converges to  $0$ with probability one, 
\item [null recurrence] if $\E[r_t]=0$ and $X_0>0$, then $X_t$ is null recurrent, or
\item [persistence] if $\E[r_t]>0$, then there exists a positive invariant measure $\mu$ such that the distribution of $X_t$ converges to $\mu$ in the weak* topology. 
\end{description} 
\end{theorem}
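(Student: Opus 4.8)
\emph{Setup and extinction.} The plan is to pass to the logarithmic coordinate $\log X_{t+1}=\log X_t + r_{t+1}-a_{t+1}X_t$ and to treat the three regimes separately, exploiting that $f(x,\omega)=\exp(r-ax)$ is strictly decreasing in $x$, with $\E[\log f(0,\xi_t)]=\E[r_t]$ and $\lim_{x\to\infty}\E[\log f(x,\xi_t)]=-\infty$. The exponential-moment hypothesis guarantees $\E[\log^+ f(0,\xi_t)]=\E[r_t^+]<\infty$, so that Theorem~\ref{thm:scalar} is available. When $\E[r_t]<0$, since $a_{t+1}X_t\ge 0$ we have $\log X_{t+1}\le \log X_t + r_{t+1}$, whence $\tfrac1t\log X_t \le \tfrac1t\log X_0 + \tfrac1t\sum_{s=1}^t r_s \to \E[r_t]<0$ by the strong law of large numbers, giving $X_t\to 0$ almost surely. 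This is exactly the argument behind Theorem~\ref{thm:scalar}(i).

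\emph{Persistence.} When $\E[r_t]>0$, the three facts above let me invoke Theorem~\ref{thm:scalar}(iii) directly: the chain is bounded in probability, almost surely bounded, and almost surely persistent, so Proposition~\ref{prop:inv+} already furnishes at least one positive invariant probability measure $\mu$. The extra content is uniqueness of $\mu$ and weak$^*$ convergence of the law of $X_t$, which I would extract from Markov-chain ergodic theory. First, because $r_{t+1}$ has a positive density on $\R$ and $X_{t+1}=x\,\exp(-a_{t+1}x)\exp(r_{t+1})$ sweeps out all of $(0,\infty)$ as $r_{t+1}$ varies, the chain has a strictly positive transition density on $(0,\infty)$ and is therefore Lebesgue-irreducible and aperiodic. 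Second, I would establish positive Harris recurrence via a Foster--Lyapunov drift: near extinction the function $V(x)=\log(1/x)$ satisfies $\E[V(X_1)\mid X_0=x]=V(x)-\E[r_t]+\E[a_t]x\le V(x)-\tfrac12\E[r_t]$ for all sufficiently small $x$, while almost sure boundedness controls large $x$. Positive Harris recurrence together with aperiodicity then yields convergence in total variation, hence in the weak$^*$ topology, to the unique $\mu$; where a geometric rate is desired one can instead verify the hypotheses of Theorem~\ref{thm:ergodic}, the upper tail being handled by $\E[\exp(p\,r_t)]<\infty$ for $p\le x_c$.

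\emph{Null recurrence, and the main obstacle.} The boundary case $\E[r_t]=0$ is the delicate one and I expect it to be the crux. Near extinction $\log X_t$ behaves like a mean-zero random walk, for which one-dimensional recurrence (Chung--Fuchs) suggests the chain keeps returning to compact subsets of $(0,\infty)$ rather than drifting to $0$; combined with the Lebesgue-irreducibility above this would give $\P[X_t\in A \text{ infinitely often}]=1$ for every $A$ of positive Lebesgue measure. To exclude a positive invariant probability measure I would argue by stationarity: were $\mu$ positive and invariant with $\log X$ integrable, then $0=\E_\mu[\log X_{t+1}-\log X_t]=\E[r_t]-\E[a_t]\,\E_\mu[X]=-\E[a_t]\,\E_\mu[X]$, forcing $\E_\mu[X]=0$ and hence $\mu=\delta_0$, a contradiction. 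The hard parts are (i) making the recurrence-versus-transience dichotomy rigorous once the nonlinear feedback $-a_{t+1}X_t$ perturbs the driftless random walk, so that a comparison argument is needed to show the perturbation cannot induce transience toward $0$, and (ii) discharging the integrability underlying the stationarity identity, which requires truncation together with the moment hypotheses on $r_t$ and $a_t$ to control both tails of $\log X$ under any candidate invariant measure.
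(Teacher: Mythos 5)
First, a point of comparison: the paper itself contains no proof of this theorem --- it is stated as a cited result of Fagerholm and H\"{o}gn\"{a}s (2002), with only the remark that the underlying technique is the Lyapunov-function characterization of ergodicity in Theorem~\ref{thm:ergodic} (as in Theorem~\ref{thm:vh}). So your proposal can only be judged against what the cited proof must actually accomplish. Your extinction case is complete and correct (it is exactly the domination-plus-SLLN argument of Theorem~\ref{thm:scalar}(i)), and your idea for excluding a positive invariant probability measure when $\E[r_t]=0$ --- the stationarity identity $0=\E[r_t]-\E[a_t]\,\E_\mu[X]$ --- is sound; indeed it mirrors the invariance computation the paper performs in Example~\ref{ex:ricker-comp} to show $\E[\widehat X^j]=r_j$.

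Two genuine gaps remain, however. In the persistence case, the phrase ``almost sure boundedness controls large $x$'' does not discharge the drift condition. The function $V(x)=\log(1/x)$ is not even admissible in Theorem~\ref{thm:ergodic}, which requires $V\ge 0$ on all of $(0,\infty)$; worse, the Ricker map is overcompensatory, so from a large state $x$ the chain lands at $X_1\approx x e^{-a_1 x}$, extremely close to $0$, where $\log(1/X_1)\approx a_1 x-\log x$ is enormous. The two boundaries therefore interact, and the proof needs a \emph{single} proper Lyapunov function blowing up at both ends, e.g.\ $V(x)=e^{cx}+\log^+(1/x)$ with $0<c\le x_c$, together with the verification that the collapse of the $e^{cx}$ term dominates the jump in the $\log^+(1/\cdot)$ term; this is precisely where the hypothesis $\E[e^{r_1x}]<\infty$ enters, and it is the computation you name but never carry out. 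In the null-recurrence case --- the genuinely novel part of the Fagerholm--H\"{o}gn\"{a}s theorem, as the paper emphasizes --- your proposal is explicitly an outline of difficulties rather than a proof: the Chung--Fuchs comparison must survive the perturbation $-a_{t+1}X_t$, which is one-signed (always pushing $\log X_t$ downward, toward extinction), so recurrence is not automatic and requires a genuine comparison/truncation argument; and ``null recurrence'' for a general state-space chain means Harris recurrence with only a $\sigma$-finite (non-finite) invariant measure, which needs the Meyn--Tweedie apparatus of petite sets and drift conditions, not merely ``returns to sets of positive Lebesgue measure infinitely often.'' As written, the middle regime is asserted, with its two hardest steps flagged but unresolved.
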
 

\citet{fagerholm-hognas-02} and \citet{gyllenberg-etal-94b} also studied the case when $a_t\le 0$ with positive probability. In this case the dynamics have the potential to be explosive (i.e. the intraspecific interactions enhance growth) and, consequently, the analysis is more subtle. This case was motivated by numerical studies where $r_t$ and $1/a_t$ were normally distributed.

\section{Multiple species interactions}

For multiple species interactions in fluctuating environments, one can consider models of the form 
\begin{equation}\label{eq:multi}
X_{t+1}^i = f_i(X_t,\xi_t)X^i_t \mbox{ with }i=1,\dots,k
\end{equation}
where $X_t^i$ is the density or frequency of species $i$ at time $t$. For these models $\S$ is a closed subset of $\R^k_+=\{x\in \R^k: x_i \ge 0\}$ and $\S_0 = \{x\in \S: \prod_i x_i =0\}$ corresponds to the extinction of one or more  species.  These models have been used extensively to understand under what conditions environmental stochasticity and species interactions facilitate or disrupt species or genetic diversity~\citep{chesson-78,chesson-warner-81,chesson-82,chesson-94,kuang-chesson-09,anderies-beisner-00,turelli-78,turelli-81}. 

Despite extensive numerical and theoretical work, there are (to the best of my knowledge)  only two sets of mathematical results concerning persistence for these multispecies models. The first set of results~\citep{chesson-ellner-89,ellner-89} applies to two species competitive systems. The second set of results~\citep{jmb-11} apply to $k$ species systems provided the dynamics satisfy an appropriate compactness assumption. Both of these results  utilize the average per-capita growth rates of populations when rare. More specifically,  define the \emph{mean per-capita growth rate of species $i$ at population state $x$} to be 
\begin{equation}\label{eq:growth1}
\lambda_i(x)=\E[\log f_i(x, \xi_t)],
\end{equation}
and define the \emph{mean per-capita growth rate of species $i$ at invariant probability measure $\mu$} to be 
\begin{equation}\label{eq:growth2}
\lambda_i(\mu)=\int \lambda_i(x) \mu(dx).
\end{equation}
When $\mu$ is ergodic (i.e. $\int \E[h(F(x,\xi_1)]\,\mu(dx)=\int h(x)\mu(dx)$ if and only if $h$ is a constant function $\mu$-almost surely), $\lambda_i(\mu)$ is the long-term average of the per-capita growth rate of population $i$ when in a system supported by the invariant measure $\mu$. More precisely, since each of the sets $\{x_i = 0\}$ is invariant under the  dynamics in (\ref{eq:multi}), there exists a set $\supp(\mu) \subset \{1,\ldots, k\}$ such that for $\mu$-almost all $x$, $x_i > 0$ if and only if $i \in \supp(\mu).$ One can interpret $\supp(\mu)$ as the set of populations supported by $\mu$.

\subsection{Competition between two species}

For competitive interactions between two species, \citet{chesson-ellner-89} and \citet{ellner-89} proved a ``mutual invasibilty'' condition implies stochastic persistence. In all of their results, they built on the single species results by assuming 
\begin{description}
\item[B1] for each $i=1,2$, there exists a positive invariant measure $\mu_i$ such that the distribution of $X^i_t$ converges to $\mu_i$ in the weak* topology whenever $X_0^i>0$ and $X_0^j=0$ for $j\neq i$, and
\item[B2] the mean per-capita growth rates $\lambda_i(x)$ are continuous functions of $x\in\S$.~\footnote{This assumption is made in \citet{ellner-89} and is met for most models. A slighter weaker assumption is made in \citet{chesson-ellner-89}.} 
\end{description}
The mutual invasibilty condition~\citep{turelli-81} asserts that the species coexist provided that $\lambda_1(\mu_2)>0$ and $\lambda_2(\mu_1)>0$. Intuitively, whenever one species, say species $2$, is rare, the dynamics of the other species approaches its invariant measure $\mu_1$. At this invariant measure, the per-capita growth rate of species $2$ is positive (i.e. $\lambda_2(\mu_1)>0$) and, consequently, increases in abundance. Since each species increases in abundance when rare, they coexist. Under the assumption that the competitive dynamics are monotonic, \citet{chesson-ellner-89} proved that mutual invasibilty implies stochastic persistence in probability.  \\

\begin{theorem}[Chesson and Ellner 1989]~\label{thm:ce} Assume the Markov chain \eqref{eq:multi} with $k=2$ satisfy \textbf{B1-B2},  and
\begin{itemize}
\item the  equations  $x_1=F_1((x_1,0),\xi_t)$ and $x_2=F_2((0,x_2),\xi_t)$ hold with probability one only for $x_1=0$ and $x_2=0$, respectively, and
\item the functions $F_i(x,\omega)$ are non-decreasing in $x_1$ and $x_2$, and positive whenever $x_i>0$.  
\end{itemize}
Then $\lambda_1(\mu_2)>0$  and $\lambda_2(\mu_1)>0$ implies  \eqref{eq:multi} is persistent in probability.
\end{theorem}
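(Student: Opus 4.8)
The plan is to establish persistence in probability by controlling, separately, the probability that each species is near extinction and then adding the two bounds. Since $d(x,\S_0)=\min(x_1,x_2)$ on the orthant, $\S_\eta=\{x_1\le\eta\}\cup\{x_2\le\eta\}$, so $\P[X_t\in\S_\eta\mid X_0=x]\le \P[X^1_t\le\eta\mid X_0=x]+\P[X^2_t\le\eta\mid X_0=x]$, and by the structural symmetry of the two species it suffices to show that for every $\epsilon>0$ there is an $\eta>0$ with $\limsup_{t\to\infty}\P[X^2_t\le\eta\mid X_0=x]\le\epsilon$ uniformly in $x\in\S\setminus\S_0$. The mechanism that does this is the invasion hypothesis $\lambda_2(\mu_1)>0$: when species $2$ is rare the resident species $1$ sits near its stationary regime $\mu_1$, so the mean per-capita growth rate of species $2$ is close to $\lambda_2(\mu_1)>0$, producing an upward drift in $\log X^2_t$ that repels trajectories from the face $\{x_2=0\}$.

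First I would make precise the phrase ``when species $2$ is rare, species $1$ is near $\mu_1$,'' which is where monotonicity enters. Because each $F(\cdot,\omega)$ is order preserving, the first coordinate $X^1_t$ of the full process can be sandwiched between single-species trajectories driven by the same noise $\xi_{t+1}$, the gap between them being governed by how small $X^2_s$ is; the first bullet hypothesis --- that $x_1=F_1((x_1,0),\xi_t)$ holds almost surely only at $x_1=0$ --- rules out a spurious random fixed point on the species-$1$ axis, so that by \textbf{B1} the single-species dynamics genuinely relaxes to the positive measure $\mu_1$ rather than stalling. Using \textbf{B2}, the continuity of $x\mapsto\lambda_2(x)=\E[\log f_2(x,\xi_t)]$ then yields a $\delta>0$ and a neighborhood of $\supp(\mu_1)$ on which $\lambda_2(x)\ge\lambda_2(\mu_1)-\delta>0$.

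Next I would turn the positive invasion rate into an ensemble bound through a Foster--Lyapunov (average Lyapunov function) argument applied to $W(x)=\max\{-\log x_2,0\}$. The one-step drift of $W$ equals $-\lambda_2(x)$ when $x_2$ is small, and this is \emph{not} negative for every such $x$: if $x_1$ is far from $\supp(\mu_1)$ the sign can fail. The cure is to replace the one-step condition by a multistep one. Running the chain for a fixed burn-in time $T$ while $X^2$ stays below $\eta$, the comparison of the previous paragraph forces $X^1_s$ to track $\mu_1$, so that $\log X^2_T-\log X^2_0=\sum_{s=0}^{T-1}\log f_2(X_s,\xi_{s+1})\ge T(\lambda_2(\mu_1)-\delta)>0$ in expectation. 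This $T$-step negative drift of $W$ toward the boundary, together with the fact that $F_2$ is positive on $\{x_2>0\}$ (so the interior is invariant), gives by the standard drift criterion --- here used without irreducibility, and so yielding only tightness away from the boundary rather than a unique stationary law --- the bound $\limsup_{t\to\infty}\P[X^2_t\le\eta\mid X_0=x]\le\epsilon$ once $\eta$ is small enough. Running the symmetric argument for species $1$ and summing completes the proof.

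The hard part will be the coupling buried in the multistep drift: the growth rate of the rare species is accrued along the true two-species path, where the resident is only approximately at $\mu_1$, and I must make the approximation quantitative and uniform over all initial $x\in\S\setminus\S_0$, as the definition of persistence in probability demands. The difficulty is genuine because the invader need not decay monotonically --- it can rise and fall, repeatedly knocking the resident away from $\mu_1$ --- so the relaxation of $X^1$ toward $\mu_1$ has to be re-established along excursions rather than once and for all. The monotonicity hypothesis is exactly what makes this tractable: it keeps the invader's feedback on the resident one-sided, so that the sandwiching trajectories move in a controlled direction and the burn-in estimate can be made uniform.
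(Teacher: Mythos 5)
The paper never proves this theorem: it is a review, and Theorem~\ref{thm:ce} is quoted from \citet{chesson-ellner-89} with no argument supplied, so your attempt can only be judged on its own merits. Judged that way, your overall strategy (use monotonicity to compare the resident with its single-species process, convert the invasion condition $\lambda_2(\mu_1)>0$ into drift of $\log X^2_t$ away from the face $\{x_2=0\}$, then a Foster--Lyapunov estimate without irreducibility) is the right kind of strategy, but the write-up has genuine gaps at exactly the points that carry the weight of the theorem.

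Concretely: (1) The step where you invoke \textbf{B2} to get $\lambda_2(x)\ge\lambda_2(\mu_1)-\delta>0$ on a neighborhood of $\supp(\mu_1)$ is invalid. The quantity $\lambda_2(\mu_1)=\int\lambda_2(x)\,\mu_1(dx)$ is an average; its positivity gives no pointwise lower bound near $\supp(\mu_1)$, on part of which $\lambda_2$ may well be negative. What is actually needed is that expectations or time-averages of $\lambda_2$ along the resident's trajectory converge to $\lambda_2(\mu_1)$; since \textbf{B1} gives only weak* convergence and $\lambda_2$ is continuous but generally unbounded (the state space is not assumed compact here), this convergence of integrals requires uniform integrability or moment hypotheses beyond \textbf{B1}--\textbf{B2}. (2) The drift-to-conclusion step is under-specified: an additive negative drift for $W(x)=\max\{-\log x_2,0\}$ outside a set, with no irreducibility, yields via the standard Meyn--Tweedie machinery only time-averaged occupation bounds, not the fixed-time bound $\limsup_{t\to\infty}\P[X^2_t\le\eta\mid X_0=x]\le\epsilon$ that persistence in probability demands. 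To get the latter one needs either a geometric drift for $V(x)=x_2^{-\theta}$, which requires an estimate of the form $\E[f_2(x,\xi_t)^{-\theta}]\le 1-\theta\delta/2$ near the face (hence finite negative moments of $f_2$), or a Hajek-type lemma requiring exponential moments of the increments $\log f_2$; neither follows from the stated hypotheses without additional work. (3) Most importantly, the uniform multistep drift estimate along excursions --- where the invader rises and falls and repeatedly perturbs the resident away from $\mu_1$ --- is the actual mathematical content of the theorem, and you name it as ``the hard part'' and leave it unresolved. Identifying the difficulty and explaining why monotonicity ought to make it tractable is a plan, not a proof; as it stands the proposal is a strategy outline with its central estimate missing.
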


It is natural to make the following conjecture. This conjecture follows from Theorem~\ref{thm:persist} whenever the dynamics of \eqref{eq:multi} asymptotically enter a compact set. 

\begin{conjecture} Under the conditions of Theorem~\ref{thm:ce}, \eqref{eq:multi} is almost surely persistent . \end{conjecture}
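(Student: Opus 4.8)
The plan is to deduce almost sure persistence from the general $k$-species criterion of \citet{jmb-11} (Theorem~\ref{thm:persist} below), which I would use in the following form: if \eqref{eq:multi} is almost surely bounded and there exist weights $p_1,p_2>0$ such that $\sum_i p_i\lambda_i(\mu)>0$ for every ergodic invariant probability measure $\mu$ with $\mu(\S_0)=1$, then \eqref{eq:multi} is almost surely persistent. The argument then splits into two tasks: classifying the boundary ergodic measures and checking this weighted invasion inequality on each, and establishing almost sure boundedness.

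For the first task I would argue that the only ergodic invariant measures supported on $\S_0=\{x\in\S:x_1x_2=0\}$ are $\delta_0$, $\mu_1$, and $\mu_2$. Since $F(0,\omega)=0$ the point mass $\delta_0$ is invariant; any ergodic measure charging the open half-axis $\{x_1>0,\,x_2=0\}$ must, by the single-species convergence in \textbf{B1} together with the hypothesis that $x_1=F_1((x_1,0),\xi_t)$ holds a.s.\ only at $x_1=0$, coincide with $\mu_1$, and symmetrically on the other axis. On each of these I would verify the inequality. At $\mu_1$ one has $\lambda_1(\mu_1)=0$ (the resident's log per-capita growth averages to zero against its own stationary law, by invariance of $\mu_1$ and integrability of $\log x_1$ with respect to $\mu_1$), so $\sum_i p_i\lambda_i(\mu_1)=p_2\lambda_2(\mu_1)>0$ by mutual invasibility; symmetrically $\sum_i p_i\lambda_i(\mu_2)=p_1\lambda_1(\mu_2)>0$; and at $\delta_0$ one gets $p_1\,\E[\log f_1(0,\xi_t)]+p_2\,\E[\log f_2(0,\xi_t)]$, which is positive because the existence of the positive single-species measures $\mu_i$ in \textbf{B1}, via Theorem~\ref{thm:scalar}, forces $\E[\log f_i(0,\xi_t)]>0$ for each $i$. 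Hence the invasion hypothesis holds for any common weights, e.g.\ $p_1=p_2=1$.

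The second task, almost sure boundedness, is where the caveat ``whenever the dynamics asymptotically enter a compact set'' enters, and it is the main obstacle. The monotonicity hypotheses of Theorem~\ref{thm:ce} alone do not rule out unbounded growth, so to close the conjecture one must supply a boundedness mechanism. The natural route is to produce a continuous proper $V:\S\to\R_+$ satisfying the drift inequality $V(F(x,\omega))\le\alpha(\omega)V(x)+\beta(\omega)$ with $\E[\log\alpha(\xi_t)]<0$ and then invoke Theorem~\ref{thm:bounded}; for competitive systems a candidate is $V(x)=x_1+x_2$ or a weighted maximum, with the decay coming from self-limitation at high total density. A cleaner alternative, when each single-species map is itself almost surely bounded, is a monotone comparison: dominate each coordinate $X^i_t$ by the corresponding single-species process and transfer its boundedness to the pair. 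Either way, once almost sure boundedness is secured, Proposition~\ref{prop:inv+} guarantees that the weak* limit points of $\Pi_t$ are positive invariant measures and Theorem~\ref{thm:persist} delivers the conclusion.

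I expect the boundedness step to be the genuine difficulty. The invasion inequalities are forced by \textbf{B1} and mutual invasibility almost for free, but turning the abstract hypothesis ``asymptotically enters a compact set'' into a verifiable structural condition (a global Lyapunov function, or uniform self-limitation) for general monotone two-species competition is exactly the gap that keeps the statement a conjecture rather than a theorem.
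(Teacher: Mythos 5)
Your overall strategy is the one the paper itself suggests: this statement is left as a conjecture precisely because it would follow from the Schreiber--Bena\"{\i}m--Atchade criterion (Theorem~\ref{thm:persistence}) once the dynamics are confined to a compact set, and your classification of the boundary ergodic measures as $\delta_0$, $\mu_1$, $\mu_2$ with $\lambda_1(\mu_1)=\lambda_2(\mu_2)=0$ and the invasion inequalities supplied by mutual invasibility is exactly the intended reduction. However, there is a genuine gap in how you state the criterion you invoke. Theorem~\ref{thm:persistence} requires assumption \textbf{C1}: a compact set $\S$ with $X_t\in\S$ for \emph{all} $t\ge 0$, i.e.\ trajectories literally confined to a compact set. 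You instead use it ``in the following form'' with \textbf{C1} replaced by \emph{almost sure boundedness}. That replacement is not a theorem --- it is, word for word, one of the other open conjectures in this paper (``Assume \eqref{eq:multi} is bounded in probability (respectively, almost surely), \textbf{C1-C3} hold, and $\lambda^*(\mu)>0$ \dots''). The paper explicitly flags this distinction as a limitation: almost sure boundedness on average, which is what the drift condition of Theorem~\ref{thm:bounded} delivers, is strictly weaker than asymptotic confinement to a compact set, and bridging that gap is an open problem. So even if your Lyapunov or comparison construction succeeded, your final step (``once almost sure boundedness is secured \dots Theorem~\ref{thm:persist} delivers the conclusion'') does not go through; you have reduced one conjecture to another.

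Two smaller problems. First, your appeal to Proposition~\ref{prop:inv+} is circular: that proposition takes persistence \emph{and} boundedness as hypotheses and concludes that limit points of $\Pi_t$ are positive invariant measures; it cannot be used as an ingredient in proving persistence. (Proposition~\ref{prop:inv} is the one available before persistence is known, and it only gives invariant, not positive, limit measures.) Second, at $\delta_0$ your claim that $\E[\log f_i(0,\xi_t)]>0$ ``via Theorem~\ref{thm:scalar}'' is unwarranted, since the hypotheses of Theorem~\ref{thm:ce} do not include the monotonicity of $x\mapsto f_i(x,\omega)$ that Theorem~\ref{thm:scalar} needs; and even granting $\max_i\lambda_i(\delta_0)>0$, your assertion that ``any common weights, e.g.\ $p_1=p_2=1$'' work is false if one of $\lambda_1(\delta_0),\lambda_2(\delta_0)$ is sufficiently negative --- the weights must be chosen so that $p_1\lambda_1(\delta_0)+p_2\lambda_2(\delta_0)>0$, which is possible precisely when the maximum is positive, but not for arbitrary $p$.
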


Under a stronger assumption about the noise terms $\xi_t$ in Theorem~\ref{thm:ce}, \citet{ellner-89} proved that there exists a unique positive invariant measure $\mu$ such that the distribution of $X_t$ converges to $\mu$ whenever $X_0^1>0$ and $X_0^2>0$. 

\begin{example}{\rm
\citet{chesson-ellner-89} illustrated the applicability of Theorem~\ref{thm:ce} with the following competition model
\begin{eqnarray*}
X^1_{t+1}&=& \frac{ \xi^1_{t+1} X_t^1}{1+\xi^1_{t+1}X^1_t+\xi_{t+1}^2 X^2_t}+a X_t^1\\
X^2_{t+1}&=& \frac{ \xi^2_{t+1} X_t^2}{1+\xi^1_{t+1}X^1_t+\xi_{t+1}^2 X^2_t}+a X_t^2
\end{eqnarray*}
where $\xi_t^i>0$ represents the per-capita fecundity of species $i$ and $0<a<1$ represents the fraction of individuals surviving to the next time step. $\xi^1_t,\xi^2_t$ are assumed to have an exchangeable joint distribution (i.e. $\P[(\xi_t^1,\xi_t^2) \in A]=\P[(\xi_t^2,\xi_t^1) \in A]$ for any Borel set $A\subset \R^2_+$.). If $\E[\log (\xi_t^1+a)]>0$, then Theorem~\ref{thm:ellner} implies that  \textbf{B1} is satisfied. By the exchangeable assumption, $\lambda_1(\mu_2)=\lambda_2(\mu_1)$. 
Following \citet{chesson-88}, define 
\[
g(\omega)=\log\left(\frac{\omega_1}{1+\omega_2x_2}+a\right).
\]
\citet{chesson-88} proved that  
\[
\lambda_1(\mu_2)= -\frac{1}{2} \E\left[\int_{\xi^1_t}^{\xi^2_t} \int_{\xi^1_t}^{\xi^2_t}\frac{\partial^2 g(\omega)}{\partial \omega_1\omega_2} d\omega_1 d\omega_2 \right].
\]
Since 
$
\frac{\partial^2 g(\omega)}{\partial \omega_1\omega_2}
<0$, it follows that $\lambda_1(\mu_2)=\lambda_2(\mu_1)>0$. $\blacksquare$
}
\end{example}

The monotonicity assumption of $F$ is too strong to cover all models of competitive interactions. For instance, Theorem~\ref{thm:ce} does not apply to stochastic Ricker models of competition (see Example~\ref{ex:ricker-comp} below). However, \citet{ellner-89} proved that under a stronger assumption on the noise terms $\xi_t$, it is possible to show stochastic persistence for such models. The theory presented in the next section provides a similar approach to verifying persistence for these models. 

\begin{theorem}[Ellner 1989]\label{thm:ellner89} Assume \textbf{B1-B2} and 
\begin{itemize}
\item \eqref{eq:multi} is $\varphi$ irreducible on $(0,\infty)\times(0,\infty)$,
\item \eqref{eq:multi} is strongly continuous, i.e. for any measurable $A\subset \R^2_+$, 
$\P[X_1\in A|X_0=x_n]$ converges to $\P[X_1\in A|X_0=x]$ whenever $x_n \to x$, and 
\item for any $x\in \R^2_+$, $\sup_{t>0} \E[\log^+ X_t^i|X_0=x] <\infty$ for $i=1,2$. 
\end{itemize}
If $\lambda_1(\mu_2)>0$ and $\lambda_2(\mu_1)>0$, then there exists a unique positive invariant measure $\mu$ and the distribution of $X_t$ converges to $\mu$ in the weak* topology whenever $X_0^1>0$ and $X_0^2>0$. 
\end{theorem}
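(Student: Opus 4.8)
The plan is to run the now-standard three-step program---existence of invariant measures, exclusion of mass on the extinction set via the invasion rates, and uniqueness together with convergence through irreducibility---but with each step supplied only by the hypotheses at hand. First I would record that the strong continuity hypothesis makes the chain Feller, and that the log-moment bound $\sup_{t}\E[\log^+ X_t^i\mid X_0=x]<\infty$ yields boundedness in probability on average: applying Markov's inequality to $\log^+ x_i$, the sets $C_R=\{x\in\S:x_1\le R,\ x_2\le R\}$ are compact and satisfy $\frac1t\sum_{s<t}\P[X_s\notin C_R\mid X_0=x]\le 2M/\log R$, which is $<\epsilon$ for $R$ large. Proposition~\ref{prop:inv} then guarantees that the Cesàro-averaged laws have weak* limit points and that each is an invariant probability measure, so invariant measures exist. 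Using \textbf{B1} and the invariance of each axis, the only ergodic invariant measures carried by $\S_0$ are $\delta_0$, $\mu_1$, and $\mu_2$.

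The heart of the argument is to show no invariant measure charges $\S_0$. Here I would use two facts about the invasion rates $\lambda_i(\mu)=\int \E[\log f_i(x,\xi_t)]\,\mu(dx)$: a resident has zero average growth at its own stationary state, $\lambda_j(\mu_j)=0$ (because $\log X^j_t$ is then stationary with finite mean, the integrability being afforded by \textbf{B1} and the log-moment bound), while single-species persistence in \textbf{B1} forces $\lambda_i(0)=\E[\log f_i(0,\xi_t)]>0$. Combined with the mutual invasibility hypotheses $\lambda_1(\mu_2)>0$ and $\lambda_2(\mu_1)>0$, this gives, for each boundary ergodic measure $\nu\in\{\delta_0,\mu_1,\mu_2\}$, a ``missing'' species $i\notin\supp(\nu)$ with $\lambda_i(\nu)>0$. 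The exclusion then rests on the averaging identity $\frac1t\log\!\big(X^i_t/X^i_0\big)=\frac1t\sum_{s<t}\log f_i(X_s,\xi_{s+1})$: the log-moment bound forces the left side to have vanishing upper average, so by the Markov property and \textbf{B2} (continuity of $x\mapsto\lambda_i(x)$) the occupation measure $\Pi_t$ cannot spend a positive fraction of time near $\nu$ without driving the realized growth rate of species $i$ up to $\lambda_i(\nu)>0$, a contradiction. This is the stochastic counterpart of Hofbauer's average-Lyapunov-function, or Garay--Hofbauer repeller, criterion, and it shows every weak* limit point of the averaged laws assigns zero mass to $\S_0$, i.e. there is a positive invariant measure and the chain is persistent.

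With persistence in hand, I would restrict attention to the interior $(0,\infty)\times(0,\infty)$, where the chain is $\varphi$-irreducible by hypothesis. Persistence provides positive recurrence to a compact set bounded away from both $\S_0$ and infinity, and strong continuity together with the positive density underlying $\varphi$-irreducibility supplies aperiodicity; the Harris-recurrence machinery behind Theorem~\ref{thm:ergodic} then delivers a unique positive invariant measure $\mu$ and weak* convergence of the law of $X_t$ to $\mu$ from every interior initial condition. Concretely, to invoke Theorem~\ref{thm:ergodic} verbatim one assembles a Lyapunov function $V$ on $(0,\infty)\times(0,\infty)$ that is proper at infinity (as in the proof of Theorem~\ref{thm:bounded}) and blows up at the boundary like the average Lyapunov function $-\sum_i p_i\log x_i$ of the previous step, chosen so that its one-step drift is a strict contraction outside an interior compact set $C$.

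The step I expect to be the main obstacle is the boundary exclusion together with the construction of $V$: because monotonicity of $F$ is \emph{not} assumed here, one cannot dominate the process by its single-species comparison dynamics, so the repulsion from $\S_0$ must be extracted from the invasion inequalities through the continuity hypothesis \textbf{B2} and the strong-noise hypotheses alone. Converting the measure-level positivity $\lambda_i(\nu)>0$ into a genuinely uniform drift estimate in a neighborhood of the boundary---while simultaneously controlling the tail at infinity and verifying the integrability of $\log x_i$ against the candidate invariant measures---is the delicate part; once a single $V$ satisfying the geometric-drift inequality of Theorem~\ref{thm:ergodic} is in place, uniqueness and convergence follow mechanically.
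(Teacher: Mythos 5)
You should know at the outset that the paper itself contains no proof of this statement: it is quoted from \citet{ellner-89}, where it is proved via the stability theory of $\varphi$-irreducible Markov chains (Foster--Lyapunov drift toward a petite set, positive Harris recurrence, aperiodicity). So your proposal must stand on its own, and while its three-step architecture (existence via tightness, exclusion of boundary mass via invasion rates, uniqueness and convergence via irreducibility) is sensible --- it is essentially the architecture of Theorems~\ref{thm:persistence} and~\ref{thm:uniqueness} --- it has a genuine gap at its heart. You assert that ``single-species persistence in \textbf{B1} forces $\lambda_i(0)=\E[\log f_i(0,\xi_t)]>0$.'' This inference is false. \textbf{B1} only asserts convergence of each single-species chain to a positive invariant measure, and that is compatible with $\E[\log f_i(0,\xi_t)]=0$: take, for example, $\E[\log f_i(x,\xi_t)]$ behaving like $c/(1+|\log x|)$ near $x=0$ with $c$ large relative to the noise variance; the drift of $\log X_t^i$ then decays like $c/|\log X_t^i|$, which by Lamperti-type criteria still gives positive recurrence, hence \textbf{B1} and \textbf{B2} hold, yet $\lambda_i(\delta_0)=0$. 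Since $\delta_0$ is itself an ergodic invariant measure on $\S_0$, your exclusion step is then left with a boundary component that has no invader, and no vector $p>0$ with $p_1\lambda_1+p_2\lambda_2$ strictly positive on all three boundary ergodic measures $\delta_0,\mu_1,\mu_2$ exists. Relatedly, even granting invadability of the origin, your ``averaging contradiction'' glosses the mixture problem: a weak* limit of the averaged laws is in general a convex combination of $\delta_0$, $\mu_1$, $\mu_2$, and interior mass, and these components are invaded by \emph{different} species, so exhibiting, for each component separately, a species $i$ with $\lambda_i(\nu)>0$ is not yet a contradiction. This is precisely why condition (ii) of Theorem~\ref{thm:persistence} (a single $p$ working simultaneously for all boundary ergodic measures) is formulated, and the equivalence of (i) and (ii) is a nontrivial lemma, not a remark. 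In Ellner's own paper the positivity of the low-density growth rates is part of the standing hypotheses on the single-species models; it cannot be extracted from \textbf{B1} as stated in this review.

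A second, more technical, gap is your plan to invoke Theorem~\ref{thm:ergodic} ``verbatim.'' That theorem requires a geometric drift inequality $\E[V(X_1)|X_0=x]\le(1-\beta)V(x)+\mathbf{1}_C(x)$, and the hypothesis $\sup_{t>0}\E[\log^+X_t^i|X_0=x]<\infty$ is a per-initial-condition moment bound carrying no pointwise one-step contraction information at infinity; no choice of proper $V$ satisfying a geometric drift can be manufactured from it. The repair is to abandon geometric drift altogether: once \emph{some} positive invariant probability measure exists, $\varphi$-irreducibility of the chain restricted to the invariant set $(0,\infty)\times(0,\infty)$ already forces that invariant measure to be unique and the restricted chain to be positive recurrent; aperiodicity (available from the irreducibility structure) together with the strong continuity hypothesis then upgrades this to convergence of the laws from \emph{every} interior initial condition. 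In other words, the last step should run through the positive-recurrence (non-geometric) form of the Meyn--Tweedie theory, which is how \citet{ellner-89} actually concludes.
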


\begin{example}[Ricker equations of competition]\label{ex:ricker-comp}{\rm To illustrate the applicability of Theorem~\ref{thm:ellner89} consider a stochastic version of the Ricker equations of competition:
\begin{equation}\label{eq:ricker-comp}
X_{t+1}^i = X_t^i  \exp(\xi^i_{t+1} -X_t^i - \alpha_j X^j_t)\quad i,j=1,2; i\neq j
\end{equation}
where $\alpha_j>0$ are inter-specific competition coefficients and $\xi^i_{t+1}$ are normally  distributed intrinsic rates of growth with means $r_i$. Since $\exp(\xi_t^i)$ are log-normally distributed, they have a positive density on $(0,\infty)$. Theorem~\ref{thm:fh}  implies that \textbf{B1} holds whenever $r_i>0$ for $i=1,2$. In particular, both species are persistent in isolation under this assumption.  

The positive density of $(\xi_t^1,\xi_t^2)$ on $(0,\infty)\times (0,\infty)$ implies that  \eqref{eq:ricker-comp} is $\varphi$-irreducible on $(0,\infty)\times (0,\infty)$ with respect to Lebesgue measure. The strong-continuity condition  is easily verified. The boundedness condition follows from  Theorem~\ref{thm:bounded} with  $V(x)=x_1+x_2$. Let $\mu_i$ with $i=1,2$ be the invariant probability measures in assumption \textbf{B1}. Theorem~\ref{thm:ellner} implies  persistence in probability for the competing species whenever 
\[
\lambda_i(\mu_j)=r_i -\alpha_j \E[\widehat X^j]>0 \mbox{ for }i=1,2\mbox{ and }j\neq i
\]
where $\widehat X^j$ is random variable with law $\mu_j$. By invariance of $\mu_j$, 
\[
\E[\log \widehat X^j] = 
\E[\log X_1^j | X_0^j =\widehat X^j] =\E[\log(\widehat X_0^j \exp(\xi_1^j -\widehat X^j)) ]
=\E[\log \widehat X^j] + r_j -\E[\widehat X^j].
\]
Hence, $\E[\widehat X^j ] =r_j$ and coexistence occurs if
\[
r_i>\alpha_j r_j \mbox{ for }i=1,2\mbox{ and }j\neq i.
\]
Thus, the conditions for coexistence are the same for this stochastic version of the Ricker equations of competition and their deterministic counterpart.$\blacksquare$ }
\end{example}

\subsection{General multispecies}

Recently, \citet{jmb-11} extended the results of Chesson and Ellner to an arbitrary number of species by developing stochastic analogs of the classical permanence criteria for deterministic systems~\citep{hofbauer-81,hofbauer-sigmund-98}. These results are based on the following assumptions about \eqref{eq:multi}.
\begin{description}
\item [C1] There exists a compact set $\S$ of $\R^k_+=\{x\in\R^k: x_i\ge 0\}$ such that $X_t\in \S$ for all $t\ge 0$.
\item [C2] $f_i(x,\omega)$ are strictly positive functions,  continuous in $x$ and measurable in $\omega.$
\item [C3]  For all $i$, $\sup_{x\in \S}\E[(\log f_i(x,\xi_t))^2 ]<\infty$.
\end{description}
Assumption \textbf{C1} requires that the populations remain bounded for all time. Assumption \textbf{C2}  implies that $\{X_t\}_{t=0}^\infty$ is Feller. Assumption \textbf{C3} is a technical assumption met by many models.

Under these assumptions, \citet{jmb-11} proved that if every invariant measure supported by $\S_0$ can be invaded by some species, then the system is persistent. 

\begin{theorem}[Schreiber, Bena\"{\i}m and Atchade 2011]
\label{thm:persistence} Assume \textbf{C1-C3} and one of the following equivalent conditions hold:
\begin{enumerate}
\item[(i)] For all invariant probability measures $\mu$  supported on $ \S_0$,
\[
\lambda_*(\mu) := \max_i  \lambda_i(\mu)>0\mbox{, or}
\]
\item[(ii)]
There exists a positive vector $p=(p_1,\dots,p_k)>0$ such that
\begin{equation}\label{eq:condition}
\sum_i p_i \lambda_i (\mu) >0
\end{equation}
for all ergodic probability measures $\mu$  supported by $\S_0.$
\end{enumerate}
Then the Markov chain \eqref{eq:multi} is almost surely persistent.
\end{theorem}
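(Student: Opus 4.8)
The plan is to adapt Hofbauer's average-Lyapunov-function method for deterministic permanence to the stochastic, empirical-measure setting. Since \textbf{C1} makes $\S$ compact, boundedness is automatic and Proposition~\ref{prop:inv} guarantees that every weak* limit point of $\Pi_t$ is, almost surely, an invariant probability measure. Almost sure persistence will therefore follow once I show that, with probability one, no weak* limit point $\mu$ of $\Pi_t$ charges the extinction set, i.e. $\mu(\S_0)=0$: a compactness argument on the weak*-compact set of limit measures then upgrades this to $\limsup_t \Pi_t(\S_\eta)\le\epsilon$ for suitable $\eta$. Before that, I would dispatch the equivalence of (i) and (ii) by a minimax argument. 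The set of invariant measures on the compact set $\S_0$ is convex with the ergodic measures as its extreme points; $\mu\mapsto\lambda_i(\mu)$ is affine and weak*-continuous, so von Neumann's theorem gives $\min_\mu\max_i\lambda_i(\mu)=\max_p\min_\mu\sum_i p_i\lambda_i(\mu)$ over the simplex of weight vectors $p$. Since the linear functional $\sum_i p_i\lambda_i$ attains its minimum over invariant measures at an ergodic one, (i) and (ii) become two faces of the same positivity statement, furnishing a single $p>0$ and a constant $\rho_0>0$ with $\sum_i p_i\lambda_i(\mu)\ge\rho_0$ for all invariant $\mu$ on $\S_0$.

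Fix such a $p$ and set $\Psi(x)=\sum_i p_i\lambda_i(x)$ and $S(x)=\sum_i p_i\log x_i$. The decisive identity is the telescoping
\[
S(X_t)-S(X_0)=\sum_{s=0}^{t-1}\sum_i p_i\log f_i(X_s,\xi_{s+1})=\sum_{s=0}^{t-1}\Psi(X_s)+\sum_{s=0}^{t-1}M_{s+1},
\]
where $M_{s+1}=\sum_i p_i\big(\log f_i(X_s,\xi_{s+1})-\lambda_i(X_s)\big)$ is a martingale-difference sequence. Assumption \textbf{C3} bounds $\E[M_{s+1}^2\mid X_s]$ uniformly, so the $L^2$ strong law for martingales gives $\frac1t\sum_{s<t}M_{s+1}\to0$ almost surely. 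Because $\S$ is compact, $S$ is bounded above, so $\limsup_t\frac1t S(X_t)\le0$, and dividing the identity by $t$ yields
\[
\limsup_{t\to\infty}\int\Psi\,d\Pi_t=\limsup_{t\to\infty}\frac1t\sum_{s=0}^{t-1}\Psi(X_s)\le0\quad\text{almost surely.}
\]
Here $\Psi$ is bounded and continuous on $\S$ by \textbf{C2}--\textbf{C3} (via uniform integrability of $(\log f_i)^2$), hence well behaved under weak* limits.

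The heart of the argument is a local invasion estimate showing that $S$ drifts upward near $\S_0$. Pointwise $\Psi$ may be negative somewhere on $\S_0$; the positivity only emerges after averaging along the boundary dynamics. Concretely, I would prove that there exist $T\ge1$, $\rho>0$ and $\eta>0$ with
\[
\widetilde\Psi(x):=\frac1T\,\E\Big[\sum_{s=0}^{T-1}\Psi(X_s)\,\Big|\,X_0=x\Big]\ge\rho\quad\text{for all } x\in\S_\eta.
\]
On $\S_0$ this follows from the uniform bound $\sum_i p_i\lambda_i(\mu)\ge\rho_0$: every weak* limit of the occupation measures $\frac1T\sum_{s<T}P^s(x,\cdot)$ with $x\in\S_0$ is an invariant measure on $\S_0$, so for $T$ large $\widetilde\Psi\ge\rho_0/2$ on $\S_0$, uniformly in $x$ by the Feller property and compactness of $\S_0$. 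Continuity of $\widetilde\Psi$ (again Feller) then extends the bound to a neighborhood $\S_\eta$.

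Finally I would combine the two conclusions. Subsampling at multiples of $T$ and writing $\frac1T\big(S(X_{(m+1)T})-S(X_{mT})\big)=\widetilde\Psi(X_{mT})+\widetilde M_{m+1}$ with $\widetilde M$ a bounded-variance martingale difference, the same martingale law together with the upper boundedness of $S$ gives $\limsup_n\frac1n\sum_{m<n}\widetilde\Psi(X_{mT})\le0$. Were $\widetilde\Psi$ nonnegative off $\S_\eta$, the bound $\widetilde\Psi\ge\rho$ on $\S_\eta$ would immediately cap the subsampled occupation of $\S_\eta$ and, letting $\eta\downarrow0$, give $\mu(\S_0)=0$ for every limit measure. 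I expect the main obstacle to be precisely this last reconciliation: $\widetilde\Psi$ may be negative in the interior, so the \emph{net} bound $\limsup_t\int\widetilde\Psi\,d\Pi_t\le0$ does not by itself preclude a limit measure from balancing a strictly positive boundary contribution against negative interior mass. Resolving this requires a genuinely local argument --- showing that excursions of $X_t$ into $\S_\eta$ are transient on average, so that $X_t$ cannot accumulate occupation near $\S_0$ --- which in turn rests on controlling the non-integrability of $S=\sum_i p_i\log x_i$ as $x\to\S_0$ through exit-time and recurrence estimates for the process away from the extinction set.
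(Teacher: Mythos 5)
First, a point of reference: this review states Theorem~\ref{thm:persistence} without proof --- it is quoted from the cited paper of Schreiber, Bena\"{\i}m and Atchad\'e --- so your attempt must be measured against that original argument, traces of which are visible in the review itself (Proposition~\ref{prop:inv}, and the definition of $\supp(\mu)$ immediately after \eqref{eq:growth2}). Your first two steps are sound and coincide with the opening of that proof: the minimax equivalence of (i) and (ii) on the compact convex set of invariant measures on $\S_0$ (with a small perturbation to make $p$ strictly positive), the telescoping identity for $S(x)=\sum_i p_i\log x_i$, the martingale strong law under \textbf{C3}, and the resulting inequality $\limsup_{t\to\infty}\int\Psi\,d\Pi_t\le 0$ almost surely. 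But the difficulty you flag in your last paragraph is a genuine gap, not a technicality, and the remedy you propose (exit-time and recurrence estimates controlling excursions into $\S_\eta$) is not how the argument closes; pursued literally it would be far harder than what is actually needed, and your steps 3--4 (the Hofbauer-type $T$-step drift bound $\widetilde\Psi\ge\rho$ on $\S_\eta$) never get reconciled with the possibly negative values of $\widetilde\Psi$ in the interior.

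The missing idea is an ergodic-theoretic lemma, not a drift or excursion estimate: \emph{if $\nu$ is an ergodic invariant probability measure and $i\in\supp(\nu)$, then $\lambda_i(\nu)=0$}; in particular any invariant $\mu_1$ with $\mu_1(\S\setminus\S_0)=1$ has $\lambda_i(\mu_1)=0$ for every $i$. Sketch: starting the chain stationary from $\nu$, Birkhoff's theorem plus your martingale law give $\frac1t\log X_t^i\to\lambda_i(\nu)$ almost surely; $\lambda_i(\nu)>0$ is impossible since $\S$ is compact so $\log X_t^i$ is bounded above, while $\lambda_i(\nu)<0$ would force $X_t^i\to 0$ almost surely, whence by stationarity $\nu(\{x:x_i>\delta\})=\P[X_t^i>\delta]\to 0$ for every $\delta>0$, i.e.\ $\nu(\{x_i=0\})=1$, contradicting $i\in\supp(\nu)$. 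Granted this lemma, your own inequality finishes the proof with no local analysis: any weak* limit point $\mu$ of $\Pi_t$ is invariant (Proposition~\ref{prop:inv}) and satisfies $\int\Psi\,d\mu\le 0$ since $\Psi$ is bounded and continuous; decomposing $\mu=\alpha\mu_0+(1-\alpha)\mu_1$ into its boundary and interior parts (both invariant, by \textbf{A3}), the interior part contributes exactly zero, so
\[
\int\Psi\,d\mu=\alpha\int\Psi\,d\mu_0+(1-\alpha)\cdot 0\ \ge\ \alpha\,\rho_0,
\]
which forces $\alpha=\mu(\S_0)=0$. Thus the scenario you worried about --- positive boundary mass balanced against negative interior mass --- simply cannot occur, because the interior contribution is not merely bounded below but identically zero. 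One further small repair: to extract a \emph{deterministic} $\eta$ from ``all limit points give zero mass to $\S_0$,'' run the compactness argument not over the random set of limit points but over the deterministic weak*-compact set $K=\{\mu\ \mbox{invariant}: \int\Psi\,d\mu\le 0\}$, every element of which has $\mu(\S_0)=0$; then $\sup_{\mu\in K}\mu(\S_\eta)\le\epsilon$ for some $\eta>0$ by portmanteau and compactness, and almost surely $\limsup_t\Pi_t(\S_\eta)\le\epsilon$ follows.
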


While this theorem, as shown below, applies to many models, it has several limitations. First, it doesn't provide a statement about persistence in probability. None the less, it is natural to make the following conjecture.
\begin{conjecture}
Under the assumptions of Theorem~\ref{thm:persistence}, \eqref{eq:multi} is persistent in probability. 
\end{conjecture}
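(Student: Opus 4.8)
The plan is to upgrade the almost sure (time-averaged) persistence supplied by Theorem~\ref{thm:persistence} to a statement about the one-step marginals. Two stages are natural. First, I would extract a Ces\`aro version of persistence in probability essentially for free: since $\Pi_t(\S_\eta)\le 1$ is uniformly bounded and $\limsup_{t\to\infty}\Pi_t(\S_\eta)\le\epsilon$ almost surely by Theorem~\ref{thm:persistence}, the reverse Fatou lemma applied to $\E_x[\Pi_t(\S_\eta)]=\frac1t\sum_{s=0}^{t-1}\P[X_s\in\S_\eta\mid X_0=x]$ yields
\[
\limsup_{t\to\infty}\frac1t\sum_{s=0}^{t-1}\P[X_s\in\S_\eta\mid X_0=x]\le\epsilon
\]
for a suitable $\eta=\eta(\epsilon)$ and all $x\in\S\setminus\S_0$. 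Equivalently, by the reasoning behind Proposition~\ref{prop:inv+}, every weak* limit point of the averaged laws $\frac1t\sum_{s<t}\P[X_s\in\cdot\mid X_0=x]$ is a positive invariant measure. The genuine content of the conjecture is to pass from this averaged bound to the pointwise statement $\limsup_{t\to\infty}\P[X_t\in\S_\eta\mid X_0=x]\le\epsilon$.

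For this I would pursue a Foster--Lyapunov argument aimed directly at the marginals. Set $V(x)=-\sum_i p_i\log x_i$ with $p$ the positive vector from \eqref{eq:condition}, and write $H(x)=\sum_i p_i\lambda_i(x)$, which is continuous and bounded on the compact set $\S$ under \textbf{C2}--\textbf{C3}. Condition \eqref{eq:condition} says $\int H\,d\mu>0$ for every ergodic, hence every invariant, $\mu$ on $\S_0$; since this set of measures is weak* compact, $\inf_\mu\int H\,d\mu=2c>0$. The key lemma I would establish is a multi-step positive drift: there exist an integer $T\ge1$ and $\eta_0>0$ with
\[
\frac1T\sum_{s=0}^{T-1}\E_x[H(X_s)]\ge c\qquad\text{for all }x\in\S_{\eta_0}.
\]
On $\S_0$ this follows from Proposition~\ref{prop:inv} (the Ces\`aro averages of $\P_x[X_s\in\cdot]$ are asymptotically invariant and supported on $\S_0$, so their $H$-integrals exceed $2c-o(1)$, uniformly in $x\in\S_0$ by compactness of $\S_0$), and it extends to the collar $\S_{\eta_0}$ because, for the fixed horizon $T$, the map $x\mapsto\frac1T\sum_s\E_x[H(X_s)]$ is continuous by the Feller property. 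Since $V(X_T)-V(X_0)=-\sum_{s<T}\sum_i p_i\log f_i(X_s,\xi_{s+1})$, the lemma gives $\E_x[V(X_T)]-V(x)\le -Tc$ for $x\in\S_{\eta_0}$, a downward drift of $V$ for the sampled chain $X_{nT}$ near $\S_0$.

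An additive drift of this kind, however, only reproduces the averaged bound of the first stage; to reach the marginals I would exponentiate. With $\widetilde V=\exp(\theta V)=\prod_i x_i^{-\theta p_i}$ and $\theta>0$ small, the target is a geometric drift $\E_x[\widetilde V(X_T)]\le\gamma\,\widetilde V(x)+b$ with $\gamma<1$, which iterates along the sampled chain and, interpolating over the at most $T$ intervening steps, gives $\sup_t\E_x[\widetilde V(X_t)]<\infty$; by Markov's inequality and $\widetilde V\to\infty$ on $\S_0$ this yields $\sup_t\P[X_t\in\S_\eta\mid X_0=x]\le\epsilon$ for small $\eta$, which is persistence in probability. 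The main obstacle lies exactly here. The contraction constant would come from a cumulant estimate $\E_x[\widetilde V(X_T)/\widetilde V(x)]\le\exp(-\theta Tc+\theta^2 C)$, whose error term $C$ must control the conditional exponential moments of $\sum_{s<T}\sum_i p_i\log f_i(X_s,\xi_{s+1})$; the standing hypothesis \textbf{C3} supplies only second moments of $\log f_i$, which does not even guarantee finiteness of $\E_x[\widetilde V(X_T)]$ when $f_i$ has a heavy lower tail. I would therefore expect to need either a strengthening of \textbf{C3} to a local exponential-moment bound, or a truncation splitting $\log f_i$ into a bounded part (handled by the drift) and a tail part (handled by Chebyshev via \textbf{C3}). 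The deeper, structural difficulty is that the favorable drift is only multi-step and averaged---pointwise $H(x)$ may be negative near $\S_0$, so there is no one-step Lyapunov inequality, and without $\varphi$-irreducibility one cannot appeal to Theorem~\ref{thm:ergodic}. This is the same obstruction that forces Theorem~\ref{thm:persistence} to deliver only a time-averaged conclusion, and it is why the monotone setting of Theorem~\ref{thm:ce} and the irreducible setting of Theorem~\ref{thm:ergodic}, where snapshots can be compared directly, currently yield persistence in probability while the general case remains open.
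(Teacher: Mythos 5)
You were asked to prove a statement that the paper itself does not prove: it is stated as a conjecture, immediately after the observation that Theorem~\ref{thm:persistence} ``doesn't provide a statement about persistence in probability.'' So there is no proof of record to compare against, and your closing judgment---that under \textbf{C1}--\textbf{C3} alone the question remains open---coincides with the paper's own position. Judged on its partial claims, your proposal is sound. The reverse-Fatou step is correct and does give the Ces\`aro analog of persistence in probability for free from the almost-sure conclusion of Theorem~\ref{thm:persistence}. Your key lemma, the uniform multi-step drift $\frac{1}{T}\sum_{s=0}^{T-1}\E[H(X_s)\mid X_0=x]\ge c$ on a collar $\S_{\eta_0}$, is true, and your order of quantifiers is the right one: establish it first on $\S_0$ by contradiction (occupation measures of the chain started in $\S_0$ stay in $\S_0$ by \textbf{A3}, and their weak* limit points are invariant by the Feller property and compactness, as in Proposition~\ref{prop:inv}, hence have $H$-integral at least $2c$), and only then pass to a collar by continuity of $x\mapsto\frac{1}{T}\sum_{s<T}\E[H(X_s)\mid X_0=x]$ for the now-fixed horizon $T$. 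Running the compactness argument directly on shrinking collars would fail, since occupation measures started near but off $\S_0$ need not have limit points supported on $\S_0$; you avoided that trap.

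The gap is exactly where you place it, and it is genuine rather than a defect of exposition. Converting the additive $T$-step drift of $V=-\sum_i p_i\log x_i$ into a geometric drift for $\widetilde V=\exp(\theta V)$ requires bounding
\[
\E\left[\exp\left(-\theta\sum_i p_i\sum_{s=0}^{T-1}\log f_i(X_s,\xi_{s+1})\right)\,\Big|\,X_0=x\right],
\]
i.e.\ exponential moments of the log-growth increments, whereas \textbf{C3} supplies only second moments. This cannot be repaired by taking $\theta$ small: if $\log f_i(x,\xi_t)$ has a polynomially decaying lower tail (fully compatible with \textbf{C2}--\textbf{C3}), then $\E[\widetilde V(X_1)\mid X_0=x]=+\infty$ for every $\theta>0$, and the scheme collapses at the first step. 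Your proposed truncation does not obviously rescue it either: the rare large shocks recur at every time step, and controlling their cumulative effect on the time-$t$ marginal---as opposed to the time average, where Chebyshev plus the additive drift suffices---is itself again a drift problem. What you have, therefore, is a correct reduction of the conjecture to a strengthened hypothesis (a locally uniform exponential-moment condition on $\log f_i$, under which your Foster--Lyapunov scheme can be carried to completion), together with an accurate diagnosis of why the hypotheses \textbf{C1}--\textbf{C3} naturally deliver only time-averaged conclusions. That is a useful analysis of an open problem, but it is not a proof of the conjecture, and you are right to present it as such.
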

A second limitation of Theorem~\ref{thm:persistence} is that it requires dynamics asymptotically confined to a compact set. While this limitation as discussed earlier might be biologically realistic, it would be useful to have a result that applies to stochastically bounded systems. In particular, one could ask whether the following conjecture (or an appropriate modification of it) is true. 
\begin{conjecture}
Assume \eqref{eq:multi} is bounded in probability (respectively, almost surely), \textbf{C1-C3} hold, and $\lambda^*(\mu)>0$ for all invariant measures $\mu$ supported by $S_0$. Then \eqref{eq:multi} is  persistent in probability (respectively, almost surely). \end{conjecture}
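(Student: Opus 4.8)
The plan is to derive the non-compact case from the compact statement of Theorem~\ref{thm:persistence} by letting the boundedness hypothesis play the role that the compactness in \textbf{C1} played there. (Following the phrase ``an appropriate modification of it,'' I read the conjecture as relaxing the compactness of $\S$ in \textbf{C1} to the stated boundedness, while keeping \textbf{C2} and a form of \textbf{C3} valid uniformly on the now non-compact $\S$.) First I would note that both ``bounded in probability'' and ``almost surely bounded'' imply bounded in probability on average, so Proposition~\ref{prop:inv} yields tightness of the averaged occupation measures $\frac1t\sum_{s=0}^{t-1}\P[X_s\in\cdot\,|X_0=x]$ (respectively of the empirical measures $\Pi_t$) and identifies each of their weak* limit points as an invariant probability measure on $\S$. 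This recovers the only structural consequence of compactness used in the original proof.

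Next I would argue by contradiction. If \eqref{eq:multi} were not persistent in probability, there would be $\epsilon_0>0$ and $\eta_n\downarrow 0$ with $\limsup_{t}\P[X_t\in\S_{\eta_n}|X_0=x]>\epsilon_0$ for all $n$. Tightness from the first step, together with a diagonal extraction in $t$ and $n$ and the fact that $\S_{\eta_n}\downarrow\S_0$, produces an invariant probability measure $\mu$ with $\mu(\S_0)\ge\epsilon_0>0$. By the invariance of $\S_0$ and of $\S\setminus\S_0$ in \textbf{A3}, the normalized restriction $\mu|_{\S_0}$ is itself invariant and carried by $\S_0$; decomposing it into ergodic components and applying condition (ii) of Theorem~\ref{thm:persistence} with the common weight $p>0$ gives $\sum_i p_i\lambda_i(\mu|_{\S_0})>0$. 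The almost sure statement runs in parallel using the almost sure limit points of $\Pi_t$ supplied by Proposition~\ref{prop:inv}.

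To close the contradiction I would track $H(x)=\sum_i p_i\log x_i$ on $\S\setminus\S_0$. On one hand, for any invariant probability measure the invariance of $\int\log x_i\,d\mu$ forces $\lambda_i(\mu)=0$ for every present species, so splitting $\mu$ into its boundary and interior parts leaves only the boundary contribution and $\sum_i p_i\lambda_i(\mu)=\mu(\S_0)\sum_i p_i\lambda_i(\mu|_{\S_0})>0$. On the other hand,
\[
\frac{1}{t}\big(H(X_t)-H(X_0)\big)=\frac{1}{t}\sum_{s=0}^{t-1}\sum_i p_i\log f_i(X_s,\xi_{s+1}),
\]
and one expects the right-hand side to converge along the chosen subsequence to $\sum_i p_i\lambda_i(\mu)>0$, whereas the left-hand side cannot have positive $\limsup$ if $X_t$ does not grow exponentially. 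These two facts are incompatible, which is the contradiction.

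The main obstacle, entirely absent in the compact setting, is the double loss of integrability at the two ends of $\S\setminus\S_0$. Near $\S_0$ the summands $\log f_i$ and the function $H$ are unbounded below, and near infinity $\log f_i$ and $\log x_i$ need no longer be bounded above; consequently both the identity $\lambda_i(\mu)=0$ (which needs $\log x_i\in L^1(\mu)$) and the convergence of the Birkhoff average of $\log f_i$ (which needs uniform integrability, together with $\tfrac1t H(X_t)\to 0$ so that excursions toward infinity are negligible on the time scale $t$) become genuine estimates rather than consequences of continuity on a compact set. I expect the cleanest route is to assume the boundedness is certified by a proper Lyapunov function $V$ as in Theorem~\ref{thm:bounded} and to control the escape to infinity through $V$ while the invasion inequality controls the approach to $\S_0$; the uniform version of \textbf{C3} then supplies the uniform integrability. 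Granting these estimates, the occupation-measure argument underlying Theorem~\ref{thm:persistence} transcribes verbatim, so the genuinely new work is precisely the integrability-at-the-two-ends bookkeeping.
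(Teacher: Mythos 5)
A framing point first: the statement you are proving is presented in the paper as an \emph{open conjecture} (offered with the hedge ``or an appropriate modification of it''), so there is no proof of record to compare against; your proposal must stand on its own, and as written it does not.

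The first genuine gap is fatal to the in-probability half. Proposition~\ref{prop:inv} produces invariant measures only as weak* limit points of the Ces\`{a}ro averages $\frac{1}{t}\sum_{s=0}^{t-1}\P[X_s\in\cdot\,|X_0=x]$ (or of $\Pi_t$); limit points of the \emph{instantaneous} distributions $\P[X_t\in\cdot\,|X_0=x]$ need not be invariant for a Markov chain. But the negation of persistence in probability only gives you times $t$ (possibly very sparse) at which $\P[X_t\in\S_{\eta_n}|X_0=x_n]>\epsilon_0$, and for initial conditions $x_n$ that may depend on $\eta_n$ --- a quantifier your contradiction hypothesis drops by fixing $x$. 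Mass that sits near $\S_0$ only at sparse times need leave no trace in the Ces\`{a}ro averages, so the ``diagonal extraction in $t$ and $n$'' does not produce an invariant $\mu$ with $\mu(\S_0)\ge\epsilon_0$. This is not a technicality: it is precisely the obstruction that makes persistence in probability open \emph{even in the compact case} --- the paper states it as a separate conjecture under the hypotheses of Theorem~\ref{thm:persistence}. An argument in which the occupation-measure machinery ``transcribes verbatim'' would settle that conjecture too, which should have been a warning sign. Bridging instantaneous probabilities and time averages requires a lingering estimate (from $\S_{\eta'}$ the chain needs on the order of $\log(\eta/\eta')$ steps to exit $\S_\eta$, since per-capita growth is controlled), and nothing of that kind appears in the proposal.

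Second, even the almost-sure half is not proved, because every analytically hard step is deferred in your last paragraph: (i) $\lambda_i(\mu)=0$ for $i\in\supp(\mu)$ requires $\log x_i\in L^1(\mu)$, which is exactly what fails to be automatic when $\log x_i$ is unbounded below near $\S_0$ and above near infinity; (ii) the convergence $\frac{1}{t}\sum_{s=0}^{t-1}\sum_i p_i\log f_i(X_s,\xi_{s+1})\to\sum_i p_i\lambda_i(\mu)$ along the chosen subsequence needs a martingale strong law plus uniform integrability of the now-unbounded functions $\lambda_i$ under the random measures $\Pi_t$; and (iii) $\limsup_t\frac{1}{t}H(X_t)\le 0$ needs control of excursions to infinity via a Lyapunov function as in Theorem~\ref{thm:bounded}. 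Items (i)--(ii) are already the hard core of the published proof of Theorem~\ref{thm:persistence} in the compact setting, where only the near-$\S_0$ unboundedness occurs; item (iii) is the genuinely new difficulty of the non-compact setting. Writing ``granting these estimates, the argument transcribes verbatim'' concedes that the conjecture has been reduced to itself. The skeleton --- extract a boundary invariant measure, test it against the weights $p_i$, contradict boundedness with linear growth of $H$ --- is the right skeleton, but the proposal supplies no new estimate, so the statement remains exactly as open as before.
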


Theorem~\ref{thm:persistence} does not ensure that there is a unique positive stationary distribution. For this stronger conclusion, there has to be sufficient noise in the system to ensure after enough time any positive population state can move close to any other positive population state. 

\begin{theorem}[Schreiber, Bena\"{\i}m and Atchade 2011]\label{thm:uniqueness} Assume that   $\{X_t\}$ is $\varphi$-irreducible over $\S \setminus \S_{\eta}$  for all $\eta > 0$, and that the assumption of Theorem~\ref{thm:persistence} holds. Then there exists a unique invariant probability measure $\mu$ such that $\mu (\S_0)=0$ and the occupation measures $\Pi_t$ converge almost surely to $\mu$ as $t\to\infty$, whenever $X_0=x\in \iS$.
\end{theorem}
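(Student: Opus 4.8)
The plan is to combine the persistence conclusion of Theorem~\ref{thm:persistence} with the ergodic theory of $\varphi$-irreducible Markov chains. Existence of a positive invariant measure comes essentially for free. Since ``the assumption of Theorem~\ref{thm:persistence}'' includes \textbf{C1}, the chain $X_t$ stays in the compact set $\S$, so it is trivially almost surely bounded, and Theorem~\ref{thm:persistence} gives almost sure persistence. Proposition~\ref{prop:inv+} then shows that, starting from any $x\in\iS$, the occupation measures $\Pi_t$ almost surely have a non-empty set of weak* limit points, each of which is a positive invariant probability measure. In particular at least one positive invariant probability measure $\mu$ exists.

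The core of the argument is uniqueness. Here I would upgrade the family of hypotheses ``$\varphi$-irreducible over $\S\setminus\S_\eta$ for every $\eta>0$'' to genuine $\psi$-irreducibility of the chain on the invariant, relatively open, locally compact state space $\iS$. Because the sets $\S\setminus\S_\eta$ increase to $\iS$ as $\eta\downarrow 0$, and because irreducibility on the larger set $\S\setminus\S_{\eta'}$ subsumes all starting points of the smaller sets $\S\setminus\S_\eta$ with $\eta\ge\eta'$, one can assemble a single maximal irreducibility measure $\psi$ on $\iS$ from a suitably normalized countable combination of the $\varphi_{\eta_n}$ along a sequence $\eta_n\downarrow 0$. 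Any positive invariant measure is supported on $\iS$, and by invariance of $\iS$ (assumption \textbf{A3}) the chain started in $\iS$ never leaves $\iS$, so it may be treated as a Markov chain on $\iS$ alone. A $\psi$-irreducible chain that admits an invariant probability measure cannot be transient and is therefore positive recurrent, whence its invariant probability measure is unique; this is the standard dichotomy for $\psi$-irreducible chains underlying Theorem~\ref{thm:ergodic} and its source \citep{meyn-tweedie-09}. This yields a unique positive invariant probability measure $\mu$ with $\mu(\S_0)=0$.

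Finally, almost sure convergence $\Pi_t\to\mu$ follows by feeding uniqueness back into Proposition~\ref{prop:inv+}. For any fixed $x\in\iS$, that proposition guarantees that almost surely every weak* limit point of $\Pi_t$ is a positive invariant probability measure; by uniqueness each such limit point must equal $\mu$. Since $\S$ is compact, the space of Borel probability measures on $\S$ is weak*-compact and metrizable, so a sequence with $\mu$ as its only limit point converges to $\mu$. Hence $\Pi_t\to\mu$ almost surely for every initial condition $x\in\iS$, with no additional Harris-recurrence argument needed to reach \emph{all} (rather than almost every) starting point.

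I expect the main obstacle to be the gluing step in the uniqueness argument. The sets $\S\setminus\S_\eta$ are \emph{not} invariant---the chain may drift toward $\S_0$---so the per-$\eta$ irreducibility is only a one-way reachability statement, and one must exploit the monotonicity of the family $\{\S\setminus\S_\eta\}$ together with the fact that persistence confines the occupation measures to these sets in order to legitimately pass to a single irreducibility measure $\psi$ on $\iS$ and invoke positive recurrence. Verifying that the existence of the invariant probability measure rules out transience on this non-compact interior set, rather than merely on a fixed compact piece as in Theorem~\ref{thm:ergodic}, is the delicate point.
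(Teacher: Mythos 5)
Your existence step and your final convergence step are sound: compactness from \textbf{C1} makes the chain trivially almost surely bounded, Theorem~\ref{thm:persistence} gives almost sure persistence, Proposition~\ref{prop:inv+} then makes every weak* limit point of $\Pi_t$ a positive invariant probability measure, and weak*-compactness and metrizability of the probability measures on the compact set $\S$ upgrade ``unique limit point'' to almost sure convergence. The genuine gap is exactly where you suspect it: the construction of a single irreducibility measure $\psi$ on $\iS$, and your stated justification does not close it. The claim that ``irreducibility on the larger set $\S\setminus\S_{\eta'}$ subsumes all starting points of the smaller sets'' runs in the wrong direction. In the combination $\psi=\sum_n 2^{-n}\bar\varphi_{\eta_n}$, the problematic pair is a target set $A$ charged \emph{only} by $\varphi_{\eta_N}$ with $\eta_N$ not small, together with a starting point $x$ satisfying $0<d(x,\S_0)\le \eta_N$: the hypothesis guarantees reachability of $A$ only from points of $\S\setminus\S_{\eta_N}$, which excludes $x$, while the measures $\varphi_{\eta_m}$ whose domains $\S\setminus\S_{\eta_m}$ do contain $x$ need not charge $A$ at all---nothing prevents the $\varphi_\eta$'s for different $\eta$ from being mutually singular (e.g.\ $\varphi_{\eta_m}$ concentrated on the annulus $\S_{\eta_N}\setminus\S_{\eta_m}$). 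So as written $\psi$ need not be an irreducibility measure on $\iS$, and the Meyn--Tweedie dichotomy (``not transient, hence positive recurrent, hence unique invariant probability'') has nothing to stand on.

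The fix uses the ingredient you name but never actually deploy: the persistence conclusion itself. Taking $\epsilon=1/2$ in the definition of almost sure persistence produces a fixed $\eta^*>0$ with $\liminf_{t\to\infty}\Pi_t(\S\setminus\S_{\eta^*})\ge 1/2$ almost surely for every $X_0=x\in\iS$; in particular, from every $x\in\iS$ the chain almost surely enters $B^*=\S\setminus\S_{\eta^*}$. Now let $A\subset \S\setminus\S_{\eta^*}$ with $\varphi_{\eta^*}(A)>0$. Every point of $B^*$ reaches $A$ with positive probability by hypothesis, so conditioning on the almost surely finite hitting time of $B^*$ and applying the Markov property shows that every $x\in\iS$ reaches $A$ with positive probability. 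Hence the \emph{single} measure $\varphi_{\eta^*}$ (no countable combination is needed) is already an irreducibility measure for the chain restricted to the invariant absorbing set $\iS$, and your appeal to the recurrence/transience dichotomy, followed by your convergence argument, then goes through. As an aside, one can bypass the Meyn--Tweedie machinery altogether: two distinct ergodic positive invariant measures $\mu_1\ne\mu_2$ have disjoint absorbing basins $G_1,G_2$ (sets from which $\Pi_t\to\mu_i$ almost surely), each meeting $\S\setminus\S_\eta$ for $\eta$ small since $\mu_i(\S_\eta)\to 0$; reachability from points of $G_i\cap(\S\setminus\S_\eta)$ forces $\varphi_\eta(\,(\S\setminus\S_\eta)\setminus G_i)=0$ for both $i$, hence $\varphi_\eta\equiv 0$, contradicting nontriviality. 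Either route works, but only after the irreducibility-on-$\iS$ step is repaired as above.
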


Note that $\varphi$-irreducibility condition does not require that the same $\varphi$ is used for all $\eta>0$. Under a stronger irreducibility condition, \citet{jmb-11} proves that the distribution of $X_t$ converges to $\mu$ whenever $X_0\in \iS$.

\begin{example}[Coexistence of many competitors]\label{ex:lottery}{\rm To illustrate the applicability of Theorem~\ref{thm:uniqueness} to higher-dimensional models of competition, consider the lottery model of \citet{chesson-warner-81}. This model describes species requiring a territory or ``home'' (an area held to the exclusion of others) in order to reproduce. Moreover, space is always in short supply and, consequently, all patches are occupied. Let $X_t^i$ denote the fraction of space occupied by species $i$ at time $t$, $\xi_t^i$ the fecundity of species $i$ at time $t$, and $d$ the fraction of individuals dying each time step. Under these assumptions, the lottery model is given by 
\begin{equation}\label{lottery}
X_{t+1}^i = (1-d) X_t^i + d \frac{X_t^i \xi_{t+1}^i}{\sum_j X_t^j \xi_{t+1}^j} \quad i=1,\dots,k.
\end{equation}
Here $\S$ is the probability simplex $\{x\in \R^k_+: \sum_i x_i =1\}$. 
Let  $\log \xi_t^i$ be normally distributed with means $b_i>0$ and variances $\sigma_i^2\ge 0$. Furthermore, assume that $\xi_t^1,\dots,\xi^k$ are independent. Since log-normal distributions have a positive density on $(0,\infty)$, $\{X_t\}$ is $\varphi$-irreducible on $\S_\eta$ with respect to Lebesgue measure for all $\eta>0$. 

If  $b_1>\dots>b_k$ take on distinct values and there is no environmental noise (i.e. $\sigma_i=0$), then  species $1$ excludes all the remaining species, i.e. $\lim_{t\to\infty}X_t^i = 0$ for $i=2,\dots,k$ whenever $X_0^1>0$. To show how environmental stochasticity can alter this ecological outcome, consider the case that $b_1=\dots=b_k$ and $\sigma_i>0$ for all $i$.  Let $\mu$ be any invariant probability measure on $\S_0$. Choose a species $i$ such that $\mu(\{x\in \S: x_j>0 $ iff $j=i\})<1$. By Taylor's theorem,
\begin{equation}\label{taylor}
\lambda_i(\mu)=-d+d\int \E\left[\frac{\xi_t^i}{\sum_j x_j \xi_t^j}\right] \mu(dx) + \mathcal{O}(d^2).
\end{equation}
Independence and Jensen's inequality imply
\begin{equation}\label{jensen}
\int \E\left[\frac{ \xi_t^i}{\sum_j x_j \xi_t^j}\right] \mu(dx) > 
\E[\xi^i] \int \frac{ 1}{\sum_j x_j \E[\xi_t^j]} \mu(dx)=1
\end{equation}
where the final equality follows from the assumption that $b_1=\dots=b_k$. Combining equations \eqref{taylor} and \eqref{jensen} imply that 
\[
\lambda_i(\mu)>0
\]
provided that $d>0$ is sufficiently small. By compactness of the invariant probability measures on $\S_0$, it follows that $\lambda^*(\mu)>0$ for all invariant probability measures supported by $\S_0$ whenever $d>0$ is sufficiently small. Hence, Theorem~\ref{thm:uniqueness} implies there is a unique positive stationary measure and \eqref{lottery} is almost surely persistent. By continuity, these conclusions still apply whenever $\max_{i,j}|b_i-b_j|$ is sufficiently small. Therefore, environmental stochasticity can mediate coexistence between an arbitrary number of competitors. 
$\blacksquare$}
\end{example}

\begin{example}[Rock, paper, scissors]\label{ex:rps}{\rm In the basic lottery model described in Example~\ref{ex:lottery} per-capita reproductive rates are independent of species frequencies. Frequency-dependent feedbacks, however, can be quite important. To illustrate these feedbacks and to illustrate that persistence may require more than invasibilty by a missing species, consider a rock-paper-scissor version of the lottery model.  To model this intransitive interaction, we assume that the per-capita reproductive rates are linear functions of the species frequencies
\[
b_i(X_t,\xi_{t+1})= \sum_j \xi^{ij}_{t+1} X^j_t
\]
where
\[
\xi_t =  \begin{pmatrix} \beta_t & \alpha_t & \gamma_t  \cr \gamma_t & \beta_t & \alpha_t \cr \alpha_t & \gamma_t & \beta_t
\end{pmatrix}
\]
and $\alpha_t>\beta_t>\gamma_t>0$ for all $t$. The frequency-dependent lottery model becomes 
\begin{equation}\label{rps}
X_{t+1}^i = (1-d) X_t^i + d \frac{X_t^i b_i(\xi_{t+1},X_t)}{\sum_j X_t^j b_j(\xi_{t+1},X_t)} \quad i=1,2,3.
\end{equation}

For any pair of strategies, say $1$ and $2$, the dominant strategy, $1$ in this case,  displaces the subordinate strategy. Indeed, assume $X^3_0=0$. If $y_t=X^2_t/X^1_t$ and $z_t= \sum_i \xi^i_{t+1}X^i_t$, then
\[
y_{t+1}=\frac{(1-d)z_t+d(\gamma_{t+1} X^1_t+\beta_{t+1} X^2_t)}{(1-d)z_t+d(\beta_{t+1} X^1_t+\alpha_{t+1} X^2_t)}\,y_t<y_t
\]
is a decreasing sequence that converges to $0$. Hence, the only ergodic, invariant probability measures on $\Delta_0$ are Dirac measures $\delta_{x}$ supported on $x=(1,0,0),(0,1,0),(0,0,1)$.  At these ergodic measures, the invasion rates are given by
\begin{center}
\begin{tabular}{c|ccc}
$\mu$ & $\lambda_1(\mu)$ & $\lambda_2(\mu)$ &$\lambda_3(\mu)$ \\\hline
$\delta_{(1,0,0)}$& 0& $\E\left[\log \left( 1-d+d\,\alpha_t/\beta_t\right)\right]$& $\E\left[\log \left( 1-d+d\,\gamma_t/\beta_t\right)\right]$\\
$\delta_{(0,1,0)}$&  $\E\left[\log \left( 1-d+d\,\gamma_t/\beta_t\right)\right]$& 0& $\E\left[\log \left( 1-d+d\,\alpha_t/\beta_t\right)\right]$\\
$\delta_{(0,0,1)}$&$\E\left[\log \left( 1-d+d\,\alpha_t/\beta_t\right)\right]$ &  $\E\left[\log \left( 1-d+d\,\gamma_t/\beta_t\right)\right]$&0
\end{tabular}
\end{center}
A straightforward algebraic competition reveals that the conditions for persistence are satisfied if and only if
\begin{equation}\label{eq:rps}
\E\left[\log \left( 1-d+d\,\alpha_t/\beta_t\right)\right]+\E\left[\log \left( 1-d+d\,\gamma_t/\beta_t\right)\right]>0
\end{equation}
For small $d>0$, a Taylor's approximation similar to Example~\ref{ex:lottery} yields the following simpler condition for persistence:
\[ \E\left[\frac{\alpha_t}{\beta_t}\right]+\E\left[\frac{\gamma_t}{\beta_t}\right]>2.
\]
for $d>0$ sufficiently small. $\blacksquare$} \end{example}

I conjecture that if the opposite inequality of \eqref{eq:rps} holds, then persistence does not occur. More generally, 

\begin{conjecture} Assume that \textbf{C1-C3} hold, \eqref{eq:multi} is $\varphi$-irreducible over $\S\setminus \S_\eta$ for all $\eta>0$, and $\lambda^*(\mu)<0$ for all invariant measures $\mu$ supported on $S_0$. Then 
\[
\lim_{t\to\infty} \prod_i X_t^i =0
\]
with probability one.  \end{conjecture}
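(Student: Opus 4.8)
The natural route is to track the single Lyapunov-type quantity $W_t=\log\prod_i X_t^i=\sum_i \log X_t^i$, which is precisely the logarithm of the object we must drive to zero, and to show $\limsup_{t\to\infty}\tfrac{1}{t}W_t<0$ almost surely. The hypothesis is well suited to this: since $\lambda^*(\mu)=\max_i\lambda_i(\mu)<0$ forces $\sum_i\lambda_i(\mu)\le k\,\lambda^*(\mu)<0$ for every invariant measure $\mu$ on $\S_0$, the constant weight $p=(1,\dots,1)$ is the correct choice and no optimization over $p$ is needed. Writing $W_{t}-W_0=\sum_{s=0}^{t-1}\sum_i\log f_i(X_s,\xi_{s+1})$ and splitting each summand into its conditional mean $\Psi(x):=\sum_i\lambda_i(x)=\E[\sum_i\log f_i(x,\xi_t)]$ and a martingale difference, assumption \textbf{C3} bounds the conditional second moments uniformly on $\S$, so the martingale strong law kills the fluctuation term. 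Using that $\Psi$ is continuous and bounded on the compact set $\S$ (\textbf{C1}--\textbf{C3}), this yields
\[
\frac{1}{t}W_t=\int_{\S}\Psi(x)\,\Pi_t(dx)+o(1)\quad\text{a.s.}
\]

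First I would pass to weak* limit points. By \textbf{C1} the empirical measures $\Pi_t$ are tight, and Proposition~\ref{prop:inv} identifies every limit point $\mu$ as an invariant probability measure; along a subsequence $\Pi_{t_k}\to\mu$ one gets $\tfrac{1}{t_k}W_{t_k}\to\int\Psi\,d\mu=\sum_i\lambda_i(\mu)$. Decomposing $\mu$ into ergodic components, every component carried by $\S_0$ contributes $\sum_i\lambda_i\le k\lambda^*<0$, while every ergodic component carried by $\iS$ contributes $\sum_i\lambda_i=0$, because for such a component the telescoping identity $\E[\log X_1^i]=\E[\log X_0^i]+\lambda_i(\mu)$ together with stationarity forces $\lambda_i(\mu)=0$ for each supported species. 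Hence $\int\Psi\,d\mu\le 0$, and by weak* compactness of the invariant measures on $\S_0$ together with continuity of $\mu\mapsto\int\Psi\,d\mu$, the bound is in fact \emph{uniformly} strictly negative once $\mu$ is supported on $\S_0$. Thus the conclusion $W_t\to-\infty$, equivalently $\prod_iX_t^i\to 0$, will follow once one shows that \emph{no} weak* limit point of $\Pi_t$ places positive mass on $\iS$, i.e. $\mu(\S_0)=1$ for every limit point $\mu$.

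The heart of the matter --- and the step I expect to be the genuine obstacle --- is exactly this exclusion of interior mass. Interior invariant measures are \emph{neutral} ($\lambda_i=0$ for every species), so the boundary hypothesis $\lambda^*<0$ does not directly ``see'' them and cannot, through the averaging above alone, rule them out; indeed, if a stable interior invariant measure existed the process would persist and the conclusion would fail. The plan is to exploit $\varphi$-irreducibility over $\S\setminus\S_\eta$ to extract a sharp recurrence dichotomy for the interior chain: either it is positive recurrent on $\iS$, producing a unique interior invariant measure $\nu$ with all $\lambda_i(\nu)=0$, or its occupation of any set bounded away from $\S_0$ is asymptotically negligible, forcing $\mu(\S_0)=1$ and closing the drift estimate. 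I would then argue that the first alternative is incompatible with $\lambda^*(\mu)<0$ holding throughout $\S_0$ --- the stochastic counterpart of Hofbauer's permanence dichotomy, namely that an average Lyapunov function certifying that $\S_0$ attracts precludes an interior attractor --- proved here either by a degree/fixed-point argument or by a local-martingale excursion argument. Making the excursion control rigorous, i.e. ruling out that arbitrarily large returns into $\iS$ occur infinitely often, and coping with the possible non-uniqueness of invariant measures when the irreducibility measure $\varphi$ is thin, is where the real work lies, and is precisely why the statement is posed as a conjecture rather than a theorem.
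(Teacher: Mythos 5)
First, a point of order: the paper contains no proof of this statement --- it is posed as an open conjecture, supported only by the remarks that \citet{benaim-etal-08} proved a continuous-time analogue and that the claim is ``definitively false'' without $\varphi$-irreducibility. So there is no proof of record to compare against; the only question is whether your proposal settles the conjecture, and by your own admission it does not. Your reduction is sound and is essentially the machinery already present in \citet{jmb-11}: set $W_t=\sum_i\log X_t^i$, split off martingale differences whose conditional variances are controlled by \textbf{C3}, identify $\tfrac1t W_t$ with $\int\Psi\,d\Pi_t+o(1)$ almost surely, pass to weak* limit points of $\Pi_t$ (invariant by Proposition~\ref{prop:inv}), and use neutrality of interior ergodic measures. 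But the decisive step --- showing that no almost-sure weak* limit point of $\Pi_t$ charges $\iS$ --- is exactly where you stop. Note that nothing in your argument up to that point uses $\varphi$-irreducibility, while the paper warns the conclusion is false without it; so irreducibility must do real work precisely in the step you leave open, and a key lemma to be established ``either by a degree/fixed-point argument or by a local-martingale excursion argument'' is a plan, not a proof. This is a genuine gap, indeed it \emph{is} the conjecture.

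There is also a structural problem in how you handle the hypothesis. Your neutrality argument applies verbatim to ergodic measures on $\S_0$: under \textbf{C1}, Birkhoff's ergodic theorem plus Poincar\'e recurrence give $\lambda_i(\mu)=0$ for every $i\in\supp(\mu)$, whether or not $\mu$ sits on the boundary. Hence every ergodic measure on $\S_0$ with nonempty support has $\lambda^*(\mu)\ge 0$, so the literal hypothesis ``$\lambda^*(\mu)<0$ for all invariant $\mu$ on $\S_0$'' forces the only ergodic boundary measure to be $\delta_0$. When $\S$ is the probability simplex --- the rock--paper--scissors setting that motivates the conjecture --- $\delta_0\notin\S$, yet $\S_0$ is compact and invariant and therefore carries some invariant measure; the hypothesis is then unsatisfiable, and your claim that boundary components contribute $\sum_i\lambda_i\le k\lambda^*<0$ is vacuously true rather than informative. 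The reading consistent with the paper's discussion of inequality \eqref{eq:rps} is the reversed form of condition (ii) of Theorem~\ref{thm:persistence}: there exists $p>0$ with $\sum_i p_i\lambda_i(\mu)<0$ for all \emph{ergodic} $\mu$ on $\S_0$. Under that reading your opening assertion that ``$p=(1,\dots,1)$ is the correct choice and no optimization over $p$ is needed'' is wrong: the weight vector matters, the Lyapunov quantity must be $\sum_i p_i\log X_t^i$, and the bound $\sum_i\lambda_i\le k\lambda^*$ is unavailable. Finally, a smaller repair: your telescoping justification of interior neutrality presumes $\E_\mu[\,|\log x_i|\,]<\infty$, which is not given; the correct argument is the Birkhoff/recurrence one just described.
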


\citet{benaim-etal-08} proved the continuous-time version of this conjecture  for stochastic ODEs on the probability simplex with a small diffusion term. Without the $\varphi$-irreducibility assumption, this conjecture is definitively false.

\section{Structured populations}

Populations often consist of a heterogeneous mixture of  individuals in different states  such as its age, size, physiological condition, or location in space~\citep{caswell-01}. If the population consists of $k$ states, then its dynamics on $\S=\R^k_+$ can be described by nonlinear, stochastic matrix models of the form 
\begin{equation}\label{eq:structured}
X_{t+1}=A(X_t,\xi_{t+1})X_t
\end{equation}
where $X_t=(X_t^1,\dots,X_t^k)$ is the vector of population densities across the different states and $A(x,\omega)$ is a non-negative matrix whose entries represent transition probabilities, survivorship likelihoods, and fecundities. Here, the extinction set is $\S_0=\{0\}$.

When population abundances are low, it seems reasonable to approximate the dynamics of \eqref{eq:structured} with the linear equation
\[
X_{t+1}=A(0,\xi_{t+1})X_t
\]
in which case, 
\[
X_t= A(0,\xi_{t})\dots A(0,\xi_1)X_0.\]
Under suitable conditions (e.g.  $A(0,\xi_t)$ are primitive, and $\E( \left| \ln\|A(0,\xi_1)\| \right|)<\infty$), the work of \citet{ruelle-79} and \citet{arnold-etal-94} implies that there exists a quantity $\gamma$  such that 
\begin{equation}\label{lyap}
\lim_{t\to\infty}\frac{1}{t} \ln \left( X_t^1+\dots+X_t^k\right) = \gamma  \mbox{ with probability one.}
\end{equation}
In other words, the total population size $ X_t^1+\dots+X_t^k$ grows approximately like  $( X_0^1+\dots+X_0^k) e^{\gamma t}$. The quantity $\gamma$ is known as the \emph{dominant Lyapunov exponent} and is also known as the \emph{stochastic growth rate} in theoretical ecology~\citep{tuljapurkar-90,caswell-01}. For the linearized model, if $\gamma>0$, the population grows exponentially and  persists. Alternatively, if $\gamma<0$, the population is driven to extinction. 

To contend with the nonlinearities in structured population models, \citet{hardin-etal-88a} extended the work of \citet{ellner-84} to structured populations in serially uncorrelated environments. Under a slightly different set of assumptions, \citet{tpb-09} proved a similar result that also addresses convergence of the empirical measures and also applies to asymptotically stationary environments. Since these latter assumptions are slightly easier to present, I will focus on them. In addition to standing assumptions \textbf{A1-A3}, the following additional assumptions are needed.  

\begin{description}
\item [D1: Primitivity] There is a positive integer $T$ such that  with probability one
\[
A(X_t,\xi_t)\dots A(X_1,\xi_1) \mbox{ has all positive entries}
\]
whenever $X_1\in \R^k_+$ and $t\ge T$.
\item [D2: Smoothness] The map $(x,\e)\to A(x,\e)$ is Borel, $x\mapsto A(x,\omega)x=F(x,\omega)$ is twice continuous differentiable for all $\e\in \S$, $x\in\Rkp$, and 
\[
\E{\left(\sup_{\|x\|\le 1}\ln^+ \left( \|F(x,\e)\|+\|D_xF(x,\e)\|+\|D^2_{x} F(x,\e)\|\right)\right)}<+\infty
\]
\item [D3: Intraspecific competition] The matrix entries
$A_{ij}(x,\e)$ satisfy
\[
\frac{\partial A_{ij}}{
\partial x_l}(x,\e) \le 0
\]
for all $\e$ and $x$. Moreover, for  each $i$ there exists some $j$ and $l$ such that this
inequality is strict for all $\e$ and $x$.
\item [D4: Compensating density dependence]
All entries of the derivative of $x\mapsto A(x,\omega)x$ are
non-negative for all $\omega$ and $x$.
\end{description}

Assumption \textbf{D1}, roughly translated, asserts that after sufficiently many time steps, individuals in every state contribute to the abundance of individuals in all other states. For constant environments (i.e. $\S$ consists of a single environmental state), this assumption corresponds to a matrix being primitive~\citep{horn-johnson-90,caswell-01}.   Assumption \textbf{D2}  is purely technical, but is met for most models. It is worth noting that \citet{hardin-etal-88a} do not require smoothness. Assumption \textbf{D3}  accounts for competition between the stages. Assumption \textbf{D4} ensures that \eqref{eq:structured} is a random, monotone dynamical system~\citep{chueshov-02}. 

When $\gamma$ is negative for the linearization, extinction is expected as the following proposition demonstrates. 

\begin{proposition}\label{prop:extinct} L
Assume \textbf{D1} and \textbf{D3} hold. Then 
\[
\limsup_{n\to\infty} \frac{1}{n}\ln \| X_t \|\le \gamma 
\]
with probability one and where $\gamma$ is the Lyapunov exponent defined in \eqref{lyap}. In particular, if $\gamma<0$, then 
\[
\lim_{n\to\infty} X_t =0 
\]
with probability one. 
 \end{proposition}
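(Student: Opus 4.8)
The plan is to dominate the nonlinear trajectory $X_t$ by the trajectory of the linearization started from the same state, and then to transfer the growth bound \eqref{lyap} through this domination. Set $Y_0=X_0$ and $Y_{t+1}=A(0,\xi_{t+1})Y_t$, the density-independent process, and work with the $\ell^1$ norm $\|v\|=\sum_i v_i$, which on $\R^k_+$ coincides with $v_1+\dots+v_k$ and is monotone for the componentwise order. All iterates stay in $\R^k_+$ because $A(x,\omega)$ is non-negative, so this norm is available throughout.

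First I would record the integrated form of \textbf{D3}: from $\partial A_{ij}/\partial x_l\le 0$ each entry $A_{ij}(\cdot,\omega)$ is non-increasing in every coordinate, hence $A_{ij}(x,\omega)\le A_{ij}(0,\omega)$ for all $x\in\R^k_+$ and all $\omega$. Since $X_t\ge 0$, this gives $X_{t+1}=A(X_t,\xi_{t+1})X_t\le A(0,\xi_{t+1})X_t$ componentwise. I would then prove $X_t\le Y_t$ componentwise by induction: the base case is $X_0=Y_0$, and if $X_t\le Y_t$ then, using that the non-negative matrix $A(0,\xi_{t+1})$ preserves the componentwise order, $X_{t+1}\le A(0,\xi_{t+1})X_t\le A(0,\xi_{t+1})Y_t=Y_{t+1}$. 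Taking norms of non-negative vectors preserves the inequality, so $\|X_t\|\le\|Y_t\|$ for all $t$.

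It remains to invoke \eqref{lyap} for $Y_t$, which gives $\lim_{t\to\infty}\frac1t\ln\|Y_t\|=\gamma$ almost surely; the primitivity this requires follows from \textbf{D1} and \textbf{D3}, because the entrywise domination $A(0,\omega)\ge A(x,\omega)$ makes the linearized matrix products entrywise larger than the nonlinear ones, so positivity of the latter forces positivity of the former. Dividing by $t$, taking logarithms, and passing to the $\limsup$ then yields $\limsup_{t\to\infty}\frac1t\ln\|X_t\|\le\gamma$ almost surely, and equivalence of norms on $\R^k$ removes the choice of norm. When $\gamma<0$, fixing $\gamma'\in(\gamma,0)$ gives $\|X_t\|\le e^{\gamma' t}$ for all large $t$ almost surely, so $X_t\to 0$. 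The argument has no serious obstacle; its whole content is the monotone comparison of the second paragraph, and the only points needing care are that the problem is genuinely order-preserving (non-negativity of $A$ and of the iterates) and that \textbf{D1} supplies the exponent $\gamma$ for the linearization.
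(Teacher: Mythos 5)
Your proof is correct, and since the paper (a mini-review) states Proposition~\ref{prop:extinct} without giving a proof, there is nothing to contrast against: your argument --- integrating \textbf{D3} to get $A(x,\omega)\le A(0,\omega)$ entrywise on $\R^k_+$, the inductive comparison $X_t\le Y_t$ with the linearized process, and transfer of the growth rate \eqref{lyap} through the monotone $\ell^1$ norm --- is exactly the standard domination-by-the-linearization argument that the cited structured-population literature uses for this result. One minor simplification you could make: because $\S_0=\{0\}$ is invariant, applying \textbf{D1} with $X_1=0$ makes the product appearing in \textbf{D1} equal to $A(0,\xi_t)\cdots A(0,\xi_1)$ itself, so positivity of the linearized products (hence the primitivity needed for \eqref{lyap}) follows directly, without routing through your entrywise-domination observation --- though that observation is also valid and needed nowhere else.
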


 When $\gamma$ is positive for the linearization, stochastic persistence is expected as the following theorem shows. 
 
 \begin{theorem}\label{thm:persist}
Assume \textbf{D1}--\textbf{D4} and the assumptions of Theorem~\ref{thm:bounded} hold (i.e. the system is bounded). If $\gamma>0$, then there exists an invariant probability measure $\mu$ such that $\mu(\S_0)=0$, the distribution of $X_t$ converges to $\mu$ whenever $X_0=x\in \S\setminus\S_0$. Moreover, with probability one, $\Pi_t$ converges toward $\mu$ whenever $X_0=x\in \S\setminus S_0$. 
\end{theorem}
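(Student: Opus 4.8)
The plan is to assemble three ingredients: boundedness, which guarantees the existence and tightness of invariant probability measures; a strictly positive stochastic growth rate at the origin, which forces persistence by repelling trajectories from $\S_0=\{0\}$; and the random monotone structure encoded in \textbf{D3}--\textbf{D4}, which upgrades persistence to \emph{uniqueness} of the positive invariant measure together with convergence of both the laws of $X_t$ and the empirical measures $\Pi_t$. The regularity in \textbf{D2} is used throughout to justify the multiplicative ergodic theorem for the linearized cocycle and the metric estimates below.

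Since the hypotheses of Theorem~\ref{thm:bounded} are assumed, $\{X_t\}$ is bounded in probability and almost surely bounded, so by Proposition~\ref{prop:inv} the weak* limit points of $\Pi_t$ (and of the Ces\`aro averages of the laws of $X_t$) are nonempty and invariant. To establish persistence I would compare the nonlinear dynamics with the linearization near $0$. By \textbf{D3} the entries of $A$ are nonincreasing in $x$, so for $X_t$ in the box $[0,\epsilon]^k$ one has the lower bound $X_{t+1}\ge A(\epsilon\mathbf{1},\xi_{t+1})X_t$, a linear cocycle whose dominant Lyapunov exponent $\gamma(\epsilon)$ converges to $\gamma>0$ as $\epsilon\downarrow 0$ by \textbf{D1} (primitivity) and the continuity of the exponent. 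Hence for $\epsilon$ small a trajectory confined to $[0,\epsilon]^k$ would satisfy $\frac{1}{t}\log\|X_t\|\to\gamma(\epsilon)>0$, which is impossible. Packaging this into a stochastic Lyapunov function supported near the origin (built from the random Perron, or reproductive-value, direction of the linearization) shows that every weak* limit point from the previous step assigns mass zero to $\S_0$; in particular a positive invariant measure exists and trajectories spend asymptotically negligible time near extinction.

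Finally I would exploit the order structure. Assumption \textbf{D4} makes $x\mapsto A(x,\e)x$ order preserving, so two trajectories driven by the same noise keep the order $X_t\le Y_t$; and \textbf{D3} makes $F(x,\e)=A(x,\e)x$ sublinear, since scaling $x$ down by $\lambda\in(0,1)$ raises the matrix entries and thus $F(\lambda x,\e)\ge\lambda F(x,\e)$, while \textbf{D1} makes the system eventually strongly order preserving. For bounded, persistent, monotone, sublinear, and primitive random dynamical systems on the cone $\Rkp$, the theory of order-preserving random systems (see \citet{chueshov-02}) produces a unique positive random equilibrium to which ordered positive trajectories synchronize: the part (or Hilbert projective) metric between two positive trajectories contracts almost surely. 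Synchronization yields uniqueness of the positive invariant measure $\mu$, weak* convergence of the law of $X_t$ to $\mu$ for every $X_0\in\iS$, and, via the ergodic theorem applied along the synchronizing coupling, almost sure weak* convergence of $\Pi_t$ to $\mu$.

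The main obstacle is the synchronization step. Sublinearity alone gives only a nonstrict contraction, so to obtain a genuine, almost surely summable contraction in the part metric one must combine the strict part of \textbf{D3}, the primitivity \textbf{D1} (which plays the role of Birkhoff's positivity in the linear theory), and the persistence estimate of the previous paragraph, which keeps the coupled trajectories in the interior of the cone where the part metric is nondegenerate. Controlling the dynamics simultaneously at the ``small'' end of the cone (where persistence must hold) and the ``large'' end (where boundedness must hold), and threading the contraction between them, is where the bulk of the technical work lies.
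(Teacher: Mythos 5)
This is a review paper: Theorem~\ref{thm:persist} is presented without proof, being quoted from \citet{tpb-09} (which builds on \citet{hardin-etal-88a} and the monotone random systems framework of \citet{chueshov-02}), and the argument given in that source is essentially the one you outline. Your route matches it step for step --- comparison of the nonlinear dynamics with the frozen linear cocycle $A(\epsilon\mathbf{1},\xi_t)$ near the origin (using \textbf{D1}--\textbf{D3} and continuity of the dominant Lyapunov exponent as $\epsilon\downarrow 0$) to rule out extinction when $\gamma>0$, and the theory of monotone, sublinear random dynamical systems (order preservation from \textbf{D4}, sublinearity from \textbf{D3}, with primitivity \textbf{D1} and the strict inequality in \textbf{D3} supplying genuine part-metric contraction) to get synchronization to a unique positive random equilibrium, hence uniqueness of $\mu$, weak* convergence of the law of $X_t$, and almost sure convergence of $\Pi_t$.
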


The monotonicity assumptions \textbf{D3-D4} are definitely not necessary for persistence (see, e.g., \citet[Theorem 2]{tpb-09}) . Hence, it is natural to conjecture that 
\begin{conjecture}
Assume \textbf{D1-D2} hold and the assumptions of Theorem~\ref{thm:bounded} hold. If $\gamma>0$, then \eqref{eq:structured} is almost surely persistent and persistent in probability. 
\end{conjecture}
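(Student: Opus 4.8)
The plan is to run the invasion-rate persistence machinery behind Theorems~\ref{thm:persistence} and~\ref{thm:persist}, but to replace the random monotone dynamical systems input (which is exactly where \textbf{D3}--\textbf{D4} enter) with the Oseledets--Furstenberg theory for products of primitive random matrices supplied by \textbf{D1}--\textbf{D2}. Since the extinction set is $\S_0=\{0\}$ and $0$ is a fixed point, the only invariant probability measure supported on $\S_0$ is the Dirac mass $\delta_0$, and the invasion rate associated to it is precisely the dominant Lyapunov exponent $\gamma$ of the linearization $X_{t+1}=A(0,\xi_{t+1})X_t$ of \eqref{lyap}. Thus the hypothesis $\gamma>0$ is the stochastic analog of the permanence condition ``every extinction-supported invariant measure is invadable,'' and the task is to convert positivity of this single exponent into repulsion from $\S_0$ without comparison (monotonicity) arguments. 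To that end I would first pass to the projective dynamics: writing $N_t=\|X_t\|_1$ and $\theta_t=X_t/N_t$, the total size obeys $\log N_{t+1}-\log N_t=\log\big(\mathbf 1^\top A(X_t,\xi_{t+1})\theta_t\big)$, so the per-capita growth of the whole population is a time average of a function of the structure $\theta_t$ and the environment, and near $\S_0$ the matrices $A(X_t,\xi_{t+1})$ are $\mathcal{O}(\|X_t\|_1)$-close to $A(0,\xi_{t+1})$ by \textbf{D2}.

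The central step is a uniform invasion estimate: there exist $T\in\mathbb N$, $\rho>0$, and $\eta_0>0$ such that
\[
\E\!\left[\log N_T-\log N_0 \,\middle|\, X_0=x\right]\ \ge\ \rho\,T\ >\ 0
\qquad\text{for all }x\in\iS\text{ with }\|x\|_1\le\eta_0 .
\]
I would obtain this from \textbf{D1}: because the $T$-step matrix products are strictly positive, they contract the positive cone in the Hilbert metric, so by the Furstenberg--Hennion theory the linear $T$-step log-growth is close to $T\gamma$ \emph{uniformly over the initial structure} $\theta_0$, not merely for generic directions. Taking $T$ large makes this exceed $\tfrac34 T\gamma$, and \textbf{D2} lets me absorb the nonlinear correction by shrinking $\eta_0$, leaving a uniform drift $\rho\approx\tfrac12\gamma$.

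With the uniform invasion estimate in hand, both forms of persistence follow by the two-sided Lyapunov scheme used for Theorem~\ref{thm:persist}. For the ``typical trajectory'' statement I would feed the estimate into a Birkhoff/occupation-measure argument: any weak* limit point of $\Pi_t$ is an invariant measure by Proposition~\ref{prop:inv} (boundedness being exactly the hypothesis of Theorem~\ref{thm:bounded}), and the positive drift near $\S_0$ forces every such limit to charge $\S_\eta$ arbitrarily little, giving almost sure persistence. For the ``ensemble'' statement I would convert the estimate into a geometric Foster--Lyapunov drift $\E[V(X_T)\mid X_0=x]\le(1-\beta)V(x)+b$ for a function $V$ comparable to $\|x\|_1^{-s}$ with small $s>0$, which blows up at $\S_0$; the moment bounds in \textbf{D2} control $\E[\exp(-s(\log N_T-\log N_0))]$ near $\S_0$, so $\limsup_t\E[V(X_t)]<\infty$, and the Markov bound $\P[X_t\in\S_\eta]\le \E[V(X_t)]/\inf_{\S_\eta}V$ yields persistence in probability, with Proposition~\ref{prop:inv+} producing the positive invariant measure.

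The main obstacle is the uniform invasion estimate, and specifically its uniformity over the population structure $\theta_0$ near $\S_0$. Monotonicity (\textbf{D3}--\textbf{D4}) previously guaranteed, through the random monotone dynamics of \citet{chueshov-02}, that the structure relaxes toward a dominant profile; without it I must instead show directly that the \emph{nonlinear} structure chain stays uniformly close to its linear counterpart over $T$ steps while $\|x\|_1\le\eta_0$, and that the exceptional set of structures along which the linear growth falls below $\gamma$ has vanishing influence. Upgrading the Furstenberg-type conclusion from ``almost surely for each fixed direction'' to ``uniformly over all positive directions, with controlled nonlinear perturbation'' is the delicate point; the integrability moments in \textbf{D2} and the strict cone contraction from \textbf{D1} are exactly the ingredients that should make it go through.
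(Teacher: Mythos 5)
First, a point of orientation: the statement you set out to prove is one of the paper's open \emph{conjectures}. The paper offers no proof of it---it is stated precisely because the monotonicity hypotheses \textbf{D3}--\textbf{D4} used in Theorem~\ref{thm:persist} are believed to be removable, but no one has shown how. So there is no ``paper proof'' to compare against, and your proposal must be judged on whether it actually closes the problem. It does not, although it heads in a sensible direction (projectivizing near the origin and treating $\gamma$ as the invasion rate of the boundary dynamics is the natural plan). The gaps are concrete. (i) Your central ``uniform invasion estimate'' is a bound on \emph{expectations}, $\E[\log N_T-\log N_0\,|\,X_0=x]\ge\rho T$, uniform over all structures $\theta_0$ in the simplex, including boundary directions (recall $\S_0=\{0\}$, so initial states with some zero components are legitimate). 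Furstenberg--Hennion theory plus the cone contraction supplied by \textbf{D1} gives almost sure convergence of $\frac{1}{t}\log\|A(0,\xi_t)\cdots A(0,\xi_1)\theta\|$ to $\gamma$, uniformly in $\theta$; but converting that into a uniform bound on expectations requires lower-tail integrability, e.g.\ $\E\left[\left|\log \min_{ij}\left(A(0,\xi_T)\cdots A(0,\xi_1)\right)_{ij}\right|\right]<\infty$, and \textbf{D1} asserts only positivity almost surely while \textbf{D2} controls only $\log^+$ of upper bounds. The same lower-tail problem recurs when you ``absorb the nonlinear correction'': the relative error $\log\|A(x,\omega)\theta\|-\log\|A(0,\omega)\theta\|$ is controlled by $\|A(x,\omega)-A(0,\omega)\|/\|A(0,\omega)\theta\|$, whose denominator has no lower bound under \textbf{D1}--\textbf{D2}.

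(ii) The persistence-in-probability half has a deeper gap. To obtain the geometric drift $\E[V(X_T)|X_0=x]\le(1-\beta)V(x)+b$ with $V(x)=\|x\|^{-s}$, you must bound $\E[(N_T/N_0)^{-s}]$, a \emph{negative power moment} of the growth ratio, uniformly near $\S_0$. A positive mean for $L=\log(N_T/N_0)$ together with $\log^+$ moments (or even the square-log moments of \textbf{C3}) does not give this: $\E[e^{-sL}]$ can be infinite for every $s>0$ even when $\E[L]>0$ and $\E[L^2]<\infty$, whenever $L$ has a sufficiently heavy left tail. This is exactly the obstruction that leaves the paper's analogous persistence-in-probability conjectures (e.g.\ the one following Theorem~\ref{thm:persistence}) open; your argument inherits it rather than resolving it. (iii) For the almost-sure half, the occupation-measure step is also not justified as written: the per-capita growth functional $x\mapsto\log\left(\|F(x,\omega)\|/\|x\|\right)$ admits no continuous extension to $x=0$ (its limit depends on the incoming direction), so a weak$^*$ limit point of $\Pi_t$ that charges $\{0\}$ cannot be handled by Proposition~\ref{prop:inv} plus ``positive drift'' alone. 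One genuinely needs to blow up the origin, work with the chain $(\|X_t\|,\theta_t)$ on $[0,\infty)\times\Delta$, identify the ergodic invariant measures on the boundary $\{0\}\times\Delta$ with invariant measures of the projective cocycle, and show each integrates the growth functional to exactly $\gamma$; you gesture at this but none of it is carried out. In short: the strategy is a reasonable attack on the open problem, and you correctly flagged the uniformity issue as the crux, but the proposal neither closes that crux nor can it deliver persistence in probability without moment hypotheses beyond \textbf{D1}--\textbf{D2}.
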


To illustrate the applicability of Theorem~\ref{thm:persist}, we consider an example from \citet{tpb-09}.

\begin{example}[Biennial plants]{\rm

Biennial plants typically flower only in the  second year of their existence after which they die. However, for many biennial species, individual plants may exhibit  delayed flowering in which they flower in a later year. Delayed flowering can serve as a bet-hedging strategy in uncertain environments~\citep{roerdink-87} provided a detailed analysis of a density-independent model of delayed flowering. Here, I describe a density-dependent version of his model. Let $X_t^1$ denote the abundance of one year old individuals in year $t$ and $X_t^2$ denote the abundance of individuals greater than one year old in year $t$. Let $p$ be the probability that a plant flowers during its second year.  Let $\xi_{t+1} s_1(X_t)$ be the mean number of germinating seeds produced by a flowering plant in year $t$ where $s_1(x_1,x_2)=\frac{1}{1+b_1(x_1+x_2)}$.  Let $s_2(x_1,x_2)=\frac{a}{1+b_2(x_1+x_2)}$ be the probability that a plant survives to the next year. Then  the plant dynamics are given by 
\begin{equation}\label{eq:biennial}
X_{t+1}=\begin{pmatrix}0& p\,\xi_{t+1} s_1(X_t) \\ s_2(X_t) & (1-p) s_2(X_t)   \end{pmatrix}X_t
\end{equation}
Let $\xi_1,\xi_2,\dots$ be a sequence of independent random variables that are Gamma distributed i.e. having the probability density function
\[
g(t)=\frac{1}{\theta^b\Gamma(k)}t^{k-1} \exp(-t/\theta),
\]
where the scale parameter is $\theta>0$,  the shape parameter is $k>0$ and $
\Gamma(k)=\int_0^\infty t^{k-1}e^{-t}\,dt$.
The mean and variance of $\xi_1$ are given by $k\theta$ and $k\theta^2$. 
\citet{roerdink-87} found an explicit formula for the dominant Lyapunov exponent. For $0<p<1$, the dominant Lyapunov exponent is given by 
\[
\gamma = \ln{a(1 -p)} + K^{-1}\int_0^\infty  \ln(1 + t)\,t^{k-1}(1 + t)^{-k} e^{-zt}\,dt
\]
where $K=\int_0^\infty  t^{k-1}(1+t)^{-k}e^{-zt}\,dt$ and $z=(1-p)^2/(\theta \,p)$. For $p=0$, $\gamma=\ln a$, while for $p=1$, $\gamma=\frac{1}{2}\left( \ln(a \theta)+\psi(a)\right)$ where $\psi(a)$ is the digamma function.  Roerdink proved that $\frac{\partial \gamma}{\partial p}$ is positive at $p=0$ and approaches $-\infty$ as $p$ approaches $1$. Hence, the stochastic growth rate $\gamma$ of the population is maximized by the population playing an appropriate  bet hedging strategy for flowering (i.e. $p$ strictly between $0$ and $1$).  Therefore, Theorem~\ref{thm:persist} implies that  persistence is more likely for populations playing a bet hedging strategy.  }
\end{example}

\section{Concluding remarks and future directions}
 
 While these results represent the promising beginnings of a general theory for persistence of stochastic difference equations, there is still much work to be done. The conjectures sprinkled throughout this review are merely the tip of the iceberg. To give a sense of some other issues, I discuss three directions for future research. 

 First and foremost, one can ask ``Is there a general theorem unifying all the particular cases of the persistence results?'' For dissipative  deterministic models, there are two characterizations of  uniform persistence. The average Lyapunov characterization due to \citet{hutson-84,hutson-88} requires the existence of a non-negative function that increases on average for trajectories near the extinction set. Alternatively, \citet{butler-waltman-86}, \citet{garay-89} and \citet{hofbauer-so-89} provided topological characterizations of uniform persistence in terms of Morse decompositions and stable sets. Intuition suggests there should be a stochastic analog of the average Lyapunov function characterization. Indeed \citet{benaim-etal-08} used average Lyapunov functions for the deterministic models to prove persistence for the stochastically perturbed models. Whether this argument can be extended is an exciting and challenging open problem.

Many environmental signals are positively autocorrelated in time~\citep{vasseur-yodzis-04}. Understanding the impacts of these autocorrelations for population persistence and species interactions is a very active area of research in theoretical and empirical population biology~\citep{heino-etal-00,gonzalez-holt-02,roy-etal-05,schwager-etal-06,matthews-gonzalez-07,reuman-etal-08,roy-holt-09,prsb-10}. To account for these autocorrelations,  the environmental sequence of random variables, $\xi_1,\xi_2,\dots$, can no longer can be independent. However, they may be stationary or even asymptotically stationary i.e. $|\P[X_t\in A_0,\dots, X_{t+n}\in A_n]-\P[X_{t+s}\in A_0,\dots, X_{t+s+n}\in A_n]|\to 0$ as $t\to\infty$ for any Borel sets $A_0,\dots, A_n\subset \Omega$, $n\ge 1$ and $s\ge 0$. \citet[Theorem 4]{tpb-09} made this extension for structured populations models satisfying monotonicity assumptions as in Theorem~\ref{thm:persist}. Similar extensions have yet to be made for multispecies models or non-monotonic structured models. 

In order to apply the methods reviewed here, there is a desperate need for general methods to estimate the mean per-capita growth rates $\lambda_i(\mu)$ and the dominant Lyapunov exponent $\gamma$. One approach is to consider ``small noise approximations'' of  these quantities when distribution of $\xi_t$ is close to a Dirac measure. For structured population models where $A(0,\xi_t)=A+\varepsilon B_t$ for a fixed non-negative matrix $A$ and  a sequence of random matrices $B_t$ with mean $0$, \citet{tuljapurkar-90} developed second and higher order approximations of the dominant Lyapunov exponent $\gamma$ with respect to $\varepsilon$.  This approximation  yielded many useful insights into stochastic demography and metapopulation persistence~\citep{wiener-tuljapurkar-94,boyce-etal-06,tuljapurkar-haridas-06,morris-etal-08,tuljapurkar-etal-09,prsb-10}. Alternatively, for models of competing species, Chesson  developed methods for estimating $\lambda_i(\mu)$ when the deterministic dynamics converge to an equilibrium and the small noise generates small demographic fluctuations around this equilibrium~\citep{chesson-88,chesson-94,chesson-00}. Extending these methods to arbitrary species interactions and random perturbations from non-equilibrium dynamics, however, is an important remaining challenge. 

 In conclusion, this review highlights the progress, challenges, and opportunities in using stochastic difference equations to understand the conditions necessary for population persistence. The speed at which this review becomes hopelessly outdated may be the best measurement of its success.  \\

\noindent \textbf{Acknowledgments.} The author thanks Peter Ralph and two anonymous reviewers for comments on the manuscript. This research was supported by National Science Foundation Grants DMS-0517987 and DMS-1022639. 

\bibliography{../../seb}

\end{document}